\newtheorem{prop}{Proposition}
\newtheorem{theorem}[prop]{Theorem}
\newtheorem{lemma}[prop]{Lemma}
\theoremstyle{definition}
\newtheorem{definition}[prop]{Definition}
\theoremstyle{remark}
\title{On the Order of P-Strict Promotion on $V\times [\ell]$}
\author{Ben Adenbaum\thanks{Department of Mathematics, Dartmouth College, Hanover, NH 03755, USA.}}
\date{}
\begin{document}
\maketitle
\begin{abstract}
Denote by $\V$ the poset consisting of the elements $\{A,B,C\}$ with cover relations $\{A\lessdot B, A\lessdot C\}$. We show that $P$-strict promotion, as defined by Bernstein, Striker, and Vorland, on $P$-strict labelings of $\V\times [\ell]$ with labels in the set $[q]$ has order $2q$ for every $\ell\ge 1$ and $q\ge 3$. The idea of proof is to suitably generalize the methods of Hopkins and Rubey which they used to prove that Promotion on the linear extensions of $\V\times [n]$ has order $6n$. As a consequence of results of Bernstein, Striker, and Vorland, this result proves that piecewise-linear promotion, and thus picewise-linear rowmotion, on $\V\times [k]$ has order $2(k+2)$ for all $k\ge 1$, as conjectured by Hopkins. 
\end{abstract}
\section{Introduction}
 Promotion is an action on the linear extensions of a finite poset, see section~\ref{sec:PromotionBackground} for definitions.  Throughout we will be concerned with a generalization of promotion, due to Bernstein, Striker, and Vorland, named \emph{$P$-strict promotion}; see Section~\ref{sec:PromotionBackground} for the definition.

Similarly, rowmotion is an operation defined on the order ideals of a finite poset; see 
Section~\ref{sec:rowmotion} for definitions. 
Historically, rowmotion was first described by Brouwer and Schrijver~\cite{brouwer1974period}, and then again by Cameron and Fon-der-Flaass~\cite{cameron1995orbits} as a composition of certain involutions called toggles. The name of rowmotion comes from the work of Striker and Williams~\cite{striker2012promotion} where, for certain posets, rowmotion is described as a composition of toggles along the rows, and it is shown to be related to promotion. In particular, the toggle definition was extended to an action referred to as piecewise-linear rowmotion on the order polytope of $P$ in~\cite{einstein2021combinatorial}.

The family of posets of the form $\V\times [k]$ have been the focus of study, because of their ``good" dynamical behavior, first conjectured by Hopkins in~\cite{hopkins2020order}, especially with respect to both Sch\"{u}tzenberger promotion of linear extensions and rowmotion. In particular it has been shown that the orders of these actions for a fixed $k$ are $6k$~\cite{hopkins2022promotion} and $2(k+2)$~\cite{plante2022whirling} and~\cite{plante2024rowmotion} respectively. Furthermore it has been conjectured~\cite{hopkins2020order} that the order of piecewise-linear rowmotion on the order polytope of $\V\times [k]$ has the same order as rowmotion on the order ideals of $\V\times [k]$. The first goal of this work is to resolve this conjecture by proving an equivalent conjecture of Bernstein, Striker, and Vorland concerning the order of $P$-strict promotion on $P$-strict labelings of $\V\times[\ell]$ with entries in $[q]$ for all $\ell\ge 1, q\ge 3$. The formal statement is given in Theorem~\ref{thm:Main}. These two conjectures are equivalent as the action of $P$-strict promotion on $\V\times[\ell]$ with entries bounded by $q$ for all $\ell\ge 1$ for a fixed $q\ge 3$ is in equivariant bijection with piecewise-linear rowmotion on $\ell$-bounded $P$-partitions of $\V\times [q-2]$. By rescaling these are the rational points of the order polytope of $\V\times [q-2]$ whose denominators are divisible by $\ell$. The method of proof was suggested as a possible attack for this problem by Bernstein, Striker, and Vorland in~\cite{bernstein2024p}.

The paper is structured as follows. In Section~\ref{sec:Background} we review the necessary background for our proof. Section~\ref{sec:Proof} is devoted to the proof. We note that while the author was not aware of this at the time of discovery this argument, a similar idea of considering a modification of the arc diagrams of Hopkins and Rubey and studying it from the perspective of $P$-strict promotion was considered by Bernstein in~\cite{bernstein2022new}. In~\cite{bernstein2022new} a proof of Theorem~\ref{thm:Main} was not given.

 \subsection*{Acknowledgements} The author thanks Sergi Elizalde, Sam Hopkins, Tom Roby, Jessica Striker, and Alexander Wilson for helpful discussions.

\section{Background}\label{sec:Background}
In this section, we review the necessary background for the argument of Theorem~\ref{thm:Main}. Throughout $P$ will denote a poset. Recall that $P$ is said to be \emph{graded of rank} $n$ if every maximal chain of $P$ has $n+1$ elements. Denote by $rk$ the rank function of such a poset.  
\subsection{Promotion}\label{sec:PromotionBackground}
For $P$ a finite poset with $|P|=m$, recall a \emph{linear extension} of $P$ is an order-preserving bijection $f:P\to [m]$. Typically this is represented as a labeling of the Hasse diagram of $P$ with the elements of $[m]$ where if $p<_{P} p' $ then the label of $p'$ is greater than the label of $p$.  Equivalently $f$ is as an ordered tuple listing the elements of $P$ where if $p<_{P} p'$ then $p$ precedes $p'$ in the tuple. Denote by $e(P)$ the set of all linear extensions of $P$. For $1\le i \le m-1$, define the $i$th Bender--Knuth involution $t_i:e(P)\to e(P)$ by setting $t_i(f)$ to be the linear extension of $P$ obtained from $f\in e(P)$ by switching the labels $i$ and $i+1$ if the elements labeled by $i$ and $i+1$ are incomparable and doing nothing otherwise. Note that $t_i$ is an involution as two consecutive applications just swaps the labels of $i$ and $i+1$ twice or does nothing twice. Define \emph{promotion} on $e(P)$ to be the map $\Pro = t_{m-1}t_{m-2}\cdots t_{2}t_1$. See Figure~\ref{fig:Pro} for an example of promotion on $\V\times [6]$.

\begin{figure}[h]
    \centering
    \begin{tikzpicture}
        \node (A1) at (0,0) {1};
        \node (A2) at (0,1) {3};
        \node (A3) at (0,2) {6};
        \node (A4) at (0,3) {7};
        \node (A5) at (0,4) {11};
        \node (A6) at (0,5) {14};
        \node (B1) at (-1,1) {4};
        \node (B2) at (-1,2) {5};
        \node (B3) at (-1,3) {8};
        \node (B4) at (-1,4) {13};
        \node (B5) at (-1,5) {15};
        \node (B6) at (-1,6) {18};
        \node (C1) at (1,1) {2};
        \node (C2) at (1,2) {9};
        \node (C3) at (1,3) {10};
        \node (C4) at (1,4) {12};
        \node (C5) at (1,5) {16};
        \node (C6) at (1,6) {17};
        \draw[-] (A1) -- (A2);
        \draw[-] (A2) -- (A3);
        \draw[-] (A3) -- (A4);
        \draw[-] (A4) -- (A5);
        \draw[-] (A5) -- (A6);
        \draw[-] (B1) -- (B2);
        \draw[-] (B2) -- (B3);
        \draw[-] (B3) -- (B4);
        \draw[-] (B4) -- (B5);
        \draw[-] (B5) -- (B6);
        \draw[-] (C1) -- (C2);
        \draw[-] (C2) -- (C3);
        \draw[-] (C3) -- (C4);
        \draw[-] (C4) -- (C5);
        \draw[-] (C5) -- (C6);
        \draw[-] (A1) -- (B1);
        \draw[-] (A1) -- (C1);
        \draw[-] (A2) -- (B2);
        \draw[-] (A2) -- (C2);
        \draw[-] (A3) -- (B3);
        \draw[-] (A3) -- (C3);
        \draw[-] (A4) -- (B4);
        \draw[-] (A4) -- (C4);
        \draw[-] (A5) -- (B5);
        \draw[-] (A5) -- (C5);
        \draw[-] (A6) -- (B6);
        \draw[-] (A6) -- (C6);
        \draw[-to] (1.5, 3) --node[above]{$\Pro$} (2,3);
        \begin{scope}[shift = {(3.5,0)}]
       \node (A1) at (0,0) {1};
        \node (A2) at (0,1) {2};
        \node (A3) at (0,2) {5};
        \node (A4) at (0,3) {6};
        \node (A5) at (0,4) {10};
        \node (A6) at (0,5) {13};
        \node (B1) at (-1,1) {3};
        \node (B2) at (-1,2) {4};
        \node (B3) at (-1,3) {7};
        \node (B4) at (-1,4) {12};
        \node (B5) at (-1,5) {14};
        \node (B6) at (-1,6) {17};
        \node (C1) at (1,1) {8};
        \node (C2) at (1,2) {9};
        \node (C3) at (1,3) {11};
        \node (C4) at (1,4) {15};
        \node (C5) at (1,5) {16};
        \node (C6) at (1,6) {18};
        \draw[-] (A1) -- (A2);
        \draw[-] (A2) -- (A3);
        \draw[-] (A3) -- (A4);
        \draw[-] (A4) -- (A5);
        \draw[-] (A5) -- (A6);
        \draw[-] (B1) -- (B2);
        \draw[-] (B2) -- (B3);
        \draw[-] (B3) -- (B4);
        \draw[-] (B4) -- (B5);
        \draw[-] (B5) -- (B6);
        \draw[-] (C1) -- (C2);
        \draw[-] (C2) -- (C3);
        \draw[-] (C3) -- (C4);
        \draw[-] (C4) -- (C5);
        \draw[-] (C5) -- (C6);
        \draw[-] (A1) -- (B1);
        \draw[-] (A1) -- (C1);
        \draw[-] (A2) -- (B2);
        \draw[-] (A2) -- (C2);
        \draw[-] (A3) -- (B3);
        \draw[-] (A3) -- (C3);
        \draw[-] (A4) -- (B4);
        \draw[-] (A4) -- (C4);
        \draw[-] (A5) -- (B5);
        \draw[-] (A5) -- (C5);
        \draw[-] (A6) -- (B6);
        \draw[-] (A6) -- (C6);
        \end{scope}
\end{tikzpicture}
\caption{An application of promotion applied to a linear extension of $\V\times [6].$}
    \label{fig:Pro}
\end{figure}
\subsection{P-Strict Promotion}\label{sec:PPromotionBackground}
We now state some preliminary definitions, following the treatment given in \cite{bernstein2024p}. The initial definitions and a more general treatment of these ideas can be found in \cite{bernstein2021p}. Notationally, using the convention of \cite{bernstein2024p}, $\mathcal{P}(\Z)$ is the set of finite subsets of $\Z$. For an example highlighting the next two definitions, see Figure~\ref{fig:V69Example}.
\begin{definition}[{\cite[Definition \ 2.2 and 2.3]{bernstein2024p}}]\label{def:PStrictLabelingAndConsistentRestriction}
A function $f:P\times [\ell]\to \Z$ is a $P$-\emph{strict labeling} of $P\times [\ell]$ \emph{with restriction function} $R:P\to \mathcal{P}(\Z)$ if $f$ satisfies the following:
\begin{enumerate}[(1)]
\item $f(p_1,i) < f(p_2,i)$ if $p_1 <_{P} p_2$, edges strictly increase along copies of $P$.
\item $f(p,i_1) \le f(p,i_2)$ if $i_2 \le i_2$, edges weakly increase along copies of $[\ell]$.
\item $f(p,i) \in R(p)$, the function takes on values given by the restriction function $R$.
\end{enumerate}

A restriction function $R$ is \emph{consistent} with respect to $P\times [\ell]$ if for all $p\in P$ and $k\in R(p)$ there exists some $P$-strict labeling $f$ of $P\times [\ell]$ with $f(p,i)=k$, $1\le i \le \ell$.
\end{definition}
Continuing to follow the notation of \cite{bernstein2024p}, for a fixed $i\in [\ell]$ we refer to $L_i=\{(p,i):p\in P\}$ as the $i$th \emph{layer} of $f$, and for $p\in P$ we call $F_p=\{(p,i)|i\in [\ell]\}$ the $p$th \emph{fiber} of $P\times [\ell]$. Additionally, we denote the set of $P$-strict labelings on $P\times [\ell]$ with restriction function $R$ by $\mathcal{L}_{P\times [\ell]}(R)$. If $R$ is the consistent restriction function induced by the respective lower and upper bounds $a,b:P\to \Z$, i.e. $R(p)$ is the largest subinterval of $[a(p),b(p)]$ that allows $R$ to be consistent, then we denote this restriction function by $R^{b}_{a}$. For our purposes we will only work in the case where $a=1,b=q$ and we denote this restriction function by $R^q$.

In the case where $R=R^q$ and $P$ is graded of rank $n$, then $R(p) = \{rk(p) +i| i \in [q-(n+1)]\}$ for all $p\in P$.
\begin{definition}[{\cite[Definition \ 2.5]{bernstein2024p}}]
Let $R(p)_{>k}$ denote the smallest label of $R(p)$ that is larger than $k$, and let $R(p)_{<k}$ denote the largest label of $R(p)$ less than $k$. If $R= R^q$, then $R(p)_{>k}$ and $R(p)_{<k}$ are $k+1$ and $k-1$ respectively if they exist.

Say that a label $f(p,i)$ in a $P$-strict labeling $f\in \mathcal{L}_{P \times [\ell]}(R)$ is \emph{raisable (lowerable)} if there exists another $P$-strict labeling  $g\in\mathcal{L}_{P \times [\ell]}(R)$ where $f(p,i)<g(p,i)$ ($f(p,i)>g(p,i)$), and $f(p',i')=g(p',i')$ for all $(p',i')\in P \times [\ell]$, with $p' \neq p$. 
\end{definition}

\begin{definition}[{\cite[Definition 2.6]{bernstein2024p}}]
\label{def:BenderKnuth}
Define the action of the \emph{$k$th Bender--Knuth involution} $\tau_k$ on 
a $P$-strict labeling $f\in \mathcal{L}_{P \times [\ell]}(R)$ be as follows: 
identify all raisable labels $f(p,i)=k$ and all lowerable labels $f(p,i)=R(p)_{>k}$. 
Call these labels `free'. Suppose the labels $f(F_p)$ include $a$ free $k$ labels followed by $b$ free $R(p)_{>k}$ labels; $\tau_k$ changes these labels to $b$ copies of $k$ followed by $a$ copies of $R(p)_{>k}$. 
\emph{Promotion} on $P$-strict labelings is defined as the composition of these involutions: $\Pro(f)=\cdots\circ\tau_3\circ\tau_2\circ\tau_1 \circ \cdots(f)$. Note that since $R$ induces upper and lower bounds on the labels, only a finite number of Bender--Knuth involutions act nontrivially.
\end{definition}
\begin{figure}[htb]
    \centering
    \begin{tikzpicture}
        \node (A1) at (0,0) {1};
        \node (A2) at (0,1) {2};
        \node (A3) at (0,2) {3};
        \node (A4) at (0,3) {3};
        \node (A5) at (0,4) {4};
        \node (A6) at (0,5) {6};
        \node (B1) at (-1,1) {3};
        \node (B2) at (-1,2) {3};
        \node (B3) at (-1,3) {4};
        \node (B4) at (-1,4) {6};
        \node (B5) at (-1,5) {7};
        \node (B6) at (-1,6) {9};
        \node (C1) at (1,1) {2};
        \node (C2) at (1,2) {4};
        \node (C3) at (1,3) {4};
        \node (C4) at (1,4) {5};
        \node (C5) at (1,5) {8};
        \node (C6) at (1,6) {8};
        \draw[-] (A1) -- (A2);
        \draw[-] (A2) -- (A3);
        \draw[-] (A3) -- (A4);
        \draw[-] (A4) -- (A5);
        \draw[-] (A5) -- (A6);
        \draw[-] (B1) -- (B2);
        \draw[-] (B2) -- (B3);
        \draw[-] (B3) -- (B4);
        \draw[-] (B4) -- (B5);
        \draw[-] (B5) -- (B6);
        \draw[-] (C1) -- (C2);
        \draw[-] (C2) -- (C3);
        \draw[-] (C3) -- (C4);
        \draw[-] (C4) -- (C5);
        \draw[-] (C5) -- (C6);
        \draw[-] (A1) -- (B1);
        \draw[-] (A1) -- (C1);
        \draw[-] (A2) -- (B2);
        \draw[-] (A2) -- (C2);
        \draw[-] (A3) -- (B3);
        \draw[-] (A3) -- (C3);
        \draw[-] (A4) -- (B4);
        \draw[-] (A4) -- (C4);
        \draw[-] (A5) -- (B5);
        \draw[-] (A5) -- (C5);
        \draw[-] (A6) -- (B6);
        \draw[-] (A6) -- (C6);
        \draw[-, thick, color = red] (A1) -- (B1);
        \draw[-, thick, color = red] (A1) -- (C1);
        \draw[-, thick, color = red] (A2) -- (B2);
        \draw[-, thick, color = red] (A2) -- (C2);
        \draw[-, thick, color = red] (A3) -- (B3);
        \draw[-, thick, color = red] (A3) -- (C3);
        \draw[-, thick, color = red] (A4) -- (B4);
        \draw[-, thick, color = red] (A4) -- (C4);
        \draw[-, thick, color = red] (A5) -- (B5);
        \draw[-, thick, color = red] (A5) -- (C5);
        \draw[-, thick, color = red] (A6) -- (B6);
        \draw[-, thick, color = red] (A6) -- (C6);
        \draw[-, thick, color = blue]  (A1) -- (A2);
        \draw[-, thick, color = blue]  (A2) -- (A3);
        \draw[-, thick, color = blue]  (A3) -- (A4);
        \draw[-, thick, color = blue]  (A4) -- (A5);
        \draw[-, thick, color = blue]  (A5) -- (A6);
        \draw[-, thick, color = blue]  (B1) -- (B2);
        \draw[-, thick, color = blue]  (B2) -- (B3);
        \draw[-, thick, color = blue]  (B3) -- (B4);
        \draw[-, thick, color = blue]  (B4) -- (B5);
        \draw[-, thick, color = blue]  (B5) -- (B6);
        \draw[-, thick, color = blue]  (C1) -- (C2);
        \draw[-, thick, color = blue]  (C2) -- (C3);
        \draw[-, thick, color = blue]  (C3) -- (C4);
        \draw[-, thick, color = blue]  (C4) -- (C5);
        \draw[-, thick, color = blue]  (C5) -- (C6);
        \begin{scope}[shift = {(3.5, 0)}]
        \node (A1) at (0,0) {\textcolor{red}{1}};
        \node (A2) at (0,1) {\textcolor{red}{2}};
        \node (A3) at (0,2) {\textcolor{red}{3}};
        \node (A4) at (0,3) {\textcolor{red}{3}};
        \node (A5) at (0,4) {\textcolor{red}{4}};
        \node (A6) at (0,5) {\textcolor{red}{6}};
        \node (B1) at (-1,1) {\textcolor{blue}{3}};
        \node (B2) at (-1,2) {\textcolor{blue}{3}};
        \node (B3) at (-1,3) {\textcolor{blue}{4}};
        \node (B4) at (-1,4) {\textcolor{blue}{6}};
        \node (B5) at (-1,5) {\textcolor{blue}{7}};
        \node (B6) at (-1,6) {\textcolor{blue}{9}};
        \node (C1) at (1,1) {\textcolor{green}{2}};
        \node (C2) at (1,2) {\textcolor{green}{4}};
        \node (C3) at (1,3) {\textcolor{green}{4}};
        \node (C4) at (1,4) {\textcolor{green}{5}};
        \node (C5) at (1,5) {\textcolor{green}{8}};
        \node (C6) at (1,6) {\textcolor{green}{8}};
        \draw[-] (A1) -- (A2);
        \draw[-] (A2) -- (A3);
        \draw[-] (A3) -- (A4);
        \draw[-] (A4) -- (A5);
        \draw[-] (A5) -- (A6);
        \draw[-] (B1) -- (B2);
        \draw[-] (B2) -- (B3);
        \draw[-] (B3) -- (B4);
        \draw[-] (B4) -- (B5);
        \draw[-] (B5) -- (B6);
        \draw[-] (C1) -- (C2);
        \draw[-] (C2) -- (C3);
        \draw[-] (C3) -- (C4);
        \draw[-] (C4) -- (C5);
        \draw[-] (C5) -- (C6);
        \draw[-] (A1) -- (B1);
        \draw[-] (A1) -- (C1);
        \draw[-] (A2) -- (B2);
        \draw[-] (A2) -- (C2);
        \draw[-] (A3) -- (B3);
        \draw[-] (A3) -- (C3);
        \draw[-] (A4) -- (B4);
        \draw[-] (A4) -- (C4);
        \draw[-] (A5) -- (B5);
        \draw[-] (A5) -- (C5);
        \draw[-] (A6) -- (B6);
        \draw[-] (A6) -- (C6);
        \end{scope}
        \begin{scope}[shift = {(7, 0)}]
        \node (A1) at (0,0) {\textcolor{red}{1}};
        \node (A2) at (0,1) {\textcolor{blue}{2}};
        \node (A3) at (0,2) {\textcolor{green}{3}};
        \node (A4) at (0,3) {\textcolor{orange}{3}};
        \node (A5) at (0,4) {\textcolor{yellow}{4}};
        \node (A6) at (0,5) {\textcolor{purple}{6}};
        \node (B1) at (-1,1) {\textcolor{red}{3}};
        \node (B2) at (-1,2) {\textcolor{blue}{3}};
        \node (B3) at (-1,3) {\textcolor{green}{4}};
        \node (B4) at (-1,4) {\textcolor{orange}{6}};
        \node (B5) at (-1,5) {\textcolor{yellow}{7}};
        \node (B6) at (-1,6) {\textcolor{purple}{9}};
        \node (C1) at (1,1) {\textcolor{red}{2}};
        \node (C2) at (1,2) {\textcolor{blue}{4}};
        \node (C3) at (1,3) {\textcolor{green}{4}};
        \node (C4) at (1,4) {\textcolor{orange}{5}};
        \node (C5) at (1,5) {\textcolor{yellow}{8}};
        \node (C6) at (1,6) {\textcolor{purple}{8}};
        \draw[-] (A1) -- (A2);
        \draw[-] (A2) -- (A3);
        \draw[-] (A3) -- (A4);
        \draw[-] (A4) -- (A5);
        \draw[-] (A5) -- (A6);
        \draw[-] (B1) -- (B2);
        \draw[-] (B2) -- (B3);
        \draw[-] (B3) -- (B4);
        \draw[-] (B4) -- (B5);
        \draw[-] (B5) -- (B6);
        \draw[-] (C1) -- (C2);
        \draw[-] (C2) -- (C3);
        \draw[-] (C3) -- (C4);
        \draw[-] (C4) -- (C5);
        \draw[-] (C5) -- (C6);
        \draw[-] (A1) -- (B1);
        \draw[-] (A1) -- (C1);
        \draw[-] (A2) -- (B2);
        \draw[-] (A2) -- (C2);
        \draw[-] (A3) -- (B3);
        \draw[-] (A3) -- (C3);
        \draw[-] (A4) -- (B4);
        \draw[-] (A4) -- (C4);
        \draw[-] (A5) -- (B5);
        \draw[-] (A5) -- (C5);
        \draw[-] (A6) -- (B6);
        \draw[-] (A6) -- (C6);
        \end{scope}
    
       \begin{scope}[shift = {(10.5, 0)}]
        \node (A1) at (0,0) {1};
        \node (A2) at (0,1) {2};
        \node (A3) at (0,2) {3};
        \node (A4) at (0,3) {3};
        \node (A5) at (0,4) {4};
        \node (A6) at (0,5) {6};
        \node (B1) at (-1,1) {3};
        \node (B2) at (-1,2) {3};
        \node (B3) at (-1,3) {4};
        \node (B4) at (-1,4) {6};
        \node (B5) at (-1,5) {7};
        \node (B6) at (-1,6) {9};
        \node (C1) at (1,1) {2};
        \node (C2) at (1,2) {4};
        \node (C3) at (1,3) {4};
        \node (C4) at (1,4) {5};
        \node (C5) at (1,5) {8};
        \node (C6) at (1,6) {8};
        \draw[color = blue] (A2) circle (7pt);
        \draw[color = blue] (A3) circle (7pt);
        \draw[color = blue] (A4) circle (7pt);
        \draw[color = blue] (A5) circle (7pt);
        \draw[color = blue] (A6) circle (7pt);
        \draw[color = blue] (B1) circle (7pt);
        \draw[color = blue] (B4) circle (7pt);
        \draw[color = blue] (B5) circle (7pt);
        \draw[color = blue] (B6) circle (7pt);
        \draw[color = blue] (C2) circle (7pt);
        \draw[color = blue] (C4) circle (7pt);
        \draw[color = blue] (C5) circle (7pt);
        \draw[color = blue] (C6) circle (7pt);
        \draw[-] (A1) -- (A2);
        \draw[-] (A2) -- (A3);
        \draw[-] (A3) -- (A4);
        \draw[-] (A4) -- (A5);
        \draw[-] (A5) -- (A6);
        \draw[-] (B1) -- (B2);
        \draw[-] (B2) -- (B3);
        \draw[-] (B3) -- (B4);
        \draw[-] (B4) -- (B5);
        \draw[-] (B5) -- (B6);
        \draw[-] (C1) -- (C2);
        \draw[-] (C2) -- (C3);
        \draw[-] (C3) -- (C4);
        \draw[-] (C4) -- (C5);
        \draw[-] (C5) -- (C6);
        \draw[-] (A1) -- (B1);
        \draw[-] (A1) -- (C1);
        \draw[-] (A2) -- (B2);
        \draw[-] (A2) -- (C2);
        \draw[-] (A3) -- (B3);
        \draw[-] (A3) -- (C3);
        \draw[-] (A4) -- (B4);
        \draw[-] (A4) -- (C4);
        \draw[-] (A5) -- (B5);
        \draw[-] (A5) -- (C5);
        \draw[-] (A6) -- (B6);
        \draw[-] (A6) -- (C6);
        \end{scope}
        \begin{scope}[shift = {(14, 0)}]
              \node (A1) at (0,0) {1};
        \node (A2) at (0,1) {2};
        \node (A3) at (0,2) {3};
        \node (A4) at (0,3) {3};
        \node (A5) at (0,4) {4};
        \node (A6) at (0,5) {6};
        \node (B1) at (-1,1) {3};
        \node (B2) at (-1,2) {3};
        \node (B3) at (-1,3) {4};
        \node (B4) at (-1,4) {6};
        \node (B5) at (-1,5) {7};
        \node (B6) at (-1,6) {9};
        \node (C1) at (1,1) {2};
        \node (C2) at (1,2) {4};
        \node (C3) at (1,3) {4};
        \node (C4) at (1,4) {5};
        \node (C5) at (1,5) {8};
        \node (C6) at (1,6) {8};
                \draw[-] (A1) -- (A2);
        \draw[-] (A2) -- (A3);
        \draw[-] (A3) -- (A4);
        \draw[-] (A4) -- (A5);
        \draw[-] (A5) -- (A6);
        \draw[-] (B1) -- (B2);
        \draw[-] (B2) -- (B3);
        \draw[-] (B3) -- (B4);
        \draw[-] (B4) -- (B5);
        \draw[-] (B5) -- (B6);
        \draw[-] (C1) -- (C2);
        \draw[-] (C2) -- (C3);
        \draw[-] (C3) -- (C4);
        \draw[-] (C4) -- (C5);
        \draw[-] (C5) -- (C6);
        \draw[-] (A1) -- (B1);
        \draw[-] (A1) -- (C1);
        \draw[-] (A2) -- (B2);
        \draw[-] (A2) -- (C2);
        \draw[-] (A3) -- (B3);
        \draw[-] (A3) -- (C3);
        \draw[-] (A4) -- (B4);
        \draw[-] (A4) -- (C4);
        \draw[-] (A5) -- (B5);
        \draw[-] (A5) -- (C5);
        \draw[-] (A6) -- (B6);
        \draw[-] (A6) -- (C6);
        \draw[color = red] (A4) circle (7pt);
        \draw[color = red] (A5) circle (7pt);
        \draw[color = red] (A6) circle (7pt);
        \draw[color = red] (B1) circle (7pt);
        \draw[color = red] (B2) circle (7pt);
        \draw[color = red] (B3) circle (7pt);
        \draw[color = red] (B4) circle (7pt);
        \draw[color = red] (B5) circle (7pt);
        \draw[color = red] (C1) circle (7pt);
        \draw[color = red] (C2) circle (7pt);
        \draw[color = red] (C3) circle (7pt);
        \draw[color = red] (C4) circle (7pt);
        \draw[color = red] (C5) circle (7pt);
        \draw[color = red] (C6) circle (7pt);
        \end{scope}
    \end{tikzpicture}
    \caption{An example $f$ in $\mathcal{L}_{\V\times [6]}(R^9)$}
    \label{fig:V69Example}
\end{figure}
To more clearly explain the example of Figure~\ref{fig:V69Example}, when reading the copies of $f$ from left to right, the first copy has the weak edges colored blue and the strict edges colored red. In the second copy $F_B$ is colored blue, $F_A$ is colored red, $F_C$ is colored green. In the third copy, each layer has a distinct color. In the fourth copy all of the lowerable labels are circled in blue. In the fifth copy all of the raisable labels are circled in red.

We now state our main result. 

\begin{theorem}\label{thm:Main}
The order of $\Pro$ on $\mathcal{L}_{\V\times [\ell]}(R^q)$ divides $2q$.
\end{theorem}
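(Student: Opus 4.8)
\medskip
\noindent\textbf{Plan of proof.} The idea is to adapt the arc-diagram technique of Hopkins and Rubey. To each $P$-strict labeling $f\in\mathcal{L}_{\V\times[\ell]}(R^q)$ I would attach a combinatorial object $\mathcal{D}(f)$ --- a ``$P$-strict arc diagram'' --- living on a circle with $2q$ marked positions, each allowed to carry a multiplicity in $\{0,1,\dots,\ell\}$; arrange that $f$ can be recovered from $\mathcal{D}(f)$; and prove that one application of $\Pro$ rotates $\mathcal{D}(f)$ by one position. Then $\mathcal{D}(\Pro^{2q}(f))=\mathcal{D}(f)$ for every $f$, so $\Pro^{2q}=\mathrm{id}$ and the order of $\Pro$ divides $2q$.

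To set things up I would fix a convenient model: an element of $\mathcal{L}_{\V\times[\ell]}(R^q)$ is the same as a triple of weakly increasing length-$\ell$ sequences --- the fibers $F_A,F_B,F_C$, with entries in $R^q(A),R^q(B),R^q(C)$ --- whose $i$th entries satisfy $a_i<b_i$ and $a_i<c_i$, or equivalently the triple of level-set functions $x_v=\#\{i:a_i\le v\}$, $y_v=\#\{i:b_i\le v\}$, $z_v=\#\{i:c_i\le v\}$. In this model I would record precisely the action of each Bender--Knuth involution $\tau_k$: it touches only the last entry equal to $k$ and the first entry equal to $k+1$ of each fiber, and which of these are ``free'' is governed by where the level sets strictly increase and by whether the conditions $a_i<b_i$, $a_i<c_i$ leave room to move that entry. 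This is routine but must be carried out carefully fiber by fiber, since the three fibers interact through the layer conditions.

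Next, generalizing the Hopkins--Rubey construction, I would build $\mathcal{D}(f)$ from the three monotone sequences by drawing non-crossing arcs that record, level by level, the interaction between the fibers over $A$ and over $B$ and between those over $A$ and over $C$, each arc carrying a multiplicity coming from the weak increases along a fiber. The passage from the natural $q$ ``value levels'' to a circular diagram with $2q$ positions is the analogue of the passage from $3n$ to $6n$ in Hopkins--Rubey and reflects that $A$ lies below the two incomparable elements $B,C$ of $\V$; the doubled diagram should be compatible with the evident $B\leftrightarrow C$ symmetry of $\mathcal{L}_{\V\times[\ell]}(R^q)$. After checking that $f\mapsto\mathcal{D}(f)$ is injective, the problem reduces to understanding $\Pro$ on diagrams.

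Showing that $\Pro$ acts on $\mathcal{D}(f)$ by rotation is the heart of the argument and the step I expect to require the most work. Following Hopkins--Rubey, one tracks the ``promotion trajectory'': running $\tau_1,\tau_2,\tau_3,\dots$ in turn, one follows how the bottom of each fiber is propagated upward and how this permutes the arc endpoints, and argues that the net effect is a single cyclic shift. The two obstacles are (i) bookkeeping --- the free entries seen by $\tau_{k+1}$ depend on the effect of $\tau_k$, so the composition $\cdots\tau_3\tau_2\tau_1$ must be treated as one connected sweep rather than toggle by toggle --- and (ii) the genuinely new features beyond the linear-extension case treated by Hopkins and Rubey, namely the weak (rather than strict) increases along fibers, i.e.\ the multiplicities on arcs, and the upper bound $q$ on labels, which is exactly what pins the period at $2q$. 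I would first analyze a warm-up case (say $\ell=1$ with $q$ arbitrary, which is closest to the Hopkins--Rubey setting) to pin down the form of the rotation, and then handle general $\ell$ by an inflation argument, using that a block of equal entries interacts with the toggles in a way that does not depend on the block size, so that the rotation statement stretches to all $\ell$. Once the rotation statement is established, injectivity of $\mathcal{D}$ upgrades $\mathcal{D}(\Pro^{2q}(f))=\mathcal{D}(f)$ to $\Pro^{2q}(f)=f$; the order divides, but need not equal, $2q$, since sufficiently symmetric labelings --- for instance the one whose three fibers are constant at their extreme values --- are fixed by rotations of smaller order.
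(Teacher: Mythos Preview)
Your proposal is a plan rather than a proof: the two load-bearing steps --- a precise construction of $\mathcal{D}(f)$ on $2q$ positions, and the claim that a single application of $\Pro$ rotates it by one --- are stated as intentions, not established. That is a genuine gap. In the paper's analysis, $\Pro$ on the natural arc diagram attached to $f$ (the generalized bump diagram on $q$ blocks) is \emph{not} a rotation: Proposition~\ref{prop:ContentRotationViaBumpDiagram} shows it acts by applying $\Pro$ to each piece of a noncrossing layer decomposition, and the arcs can reattach in nontrivial ways. So your assertion that some $2q$-position encoding exists on which $\Pro$ is literally a one-step rotation is not obvious, and you have not supplied it. The ``inflation'' from $\ell=1$ to general $\ell$ is also too vague: blocks of equal entries do interact with the toggles in size-dependent ways (this is exactly why the paper's Lemma~\ref{lem:ProOnStandardization} has $\Pro$ on the word corresponding to $\Pro^{|w_1|}$, a \emph{variable} power, on the standardization).

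It is worth contrasting your intended route with the paper's. The paper does not build a $2q$-object with rotation. Instead it (i) proves that $\Pro$ acts layerwise on a noncrossing layer decomposition (Proposition~\ref{prop:ContentRotationViaBumpDiagram}); (ii) shows ``double arcs'' rotate rigidly and can be stripped off (Lemmas~\ref{lem:doubleArcRot}--\ref{lem:NoNewDoubleArcs}, Proposition~\ref{prop:IgnoreDoubleArcs}); (iii) after that reduction, \emph{standardizes} the remaining $(\ell,q)$-word to an ordinary Kreweras word of length $3\ell$ and proves $\std(\Pro(w))=\Pro^{|w_1|}(\std(w))$ (Lemma~\ref{lem:ProOnStandardization}); (iv) over $q$ iterations the exponents sum to $3\ell$, so $\Pro^q$ on $w$ corresponds to $\Pro^{3\ell}$ on $\std(w)$, which by Hopkins--Rubey is the $B\leftrightarrow C$ flip; hence $\Pro^{2q}=\mathrm{id}$. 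The payoff of this approach is that it \emph{reduces} to Hopkins--Rubey as a black box rather than re-deriving a trip-permutation/rules-of-the-road theory in the semistandard setting. Your direct-generalization strategy might succeed, but it would require building that theory from scratch, and nothing in your outline indicates how the crucial rotation claim would be proved.
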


When restricting to where $q= \ell|P|$, and all the labels are distinct, $P$-strict promotion is the same as promotion on the linear extensions of $P\times [\ell]$.
\begin{figure}[htb]
    \centering
    \begin{tikzpicture}[scale = .7]
        \node (A1) at (0,0) {1};
        \node (A2) at (0,1) {2};
        \node (A3) at (0,2) {3};
        \node (A4) at (0,3) {3};
        \node (A5) at (0,4) {4};
        \node (A6) at (0,5) {6};
        \node (B1) at (-1,1) {3};
        \node (B2) at (-1,2) {3};
        \node (B3) at (-1,3) {4};
        \node (B4) at (-1,4) {6};
        \node (B5) at (-1,5) {7};
        \node (B6) at (-1,6) {9};
        \node (C1) at (1,1) {2};
        \node (C2) at (1,2) {4};
        \node (C3) at (1,3) {4};
        \node (C4) at (1,4) {5};
        \node (C5) at (1,5) {8};
        \node (C6) at (1,6) {8};
        \draw[-] (A1) -- (A2);
        \draw[-] (A2) -- (A3);
        \draw[-] (A3) -- (A4);
        \draw[-] (A4) -- (A5);
        \draw[-] (A5) -- (A6);
        \draw[-] (B1) -- (B2);
        \draw[-] (B2) -- (B3);
        \draw[-] (B3) -- (B4);
        \draw[-] (B4) -- (B5);
        \draw[-] (B5) -- (B6);
        \draw[-] (C1) -- (C2);
        \draw[-] (C2) -- (C3);
        \draw[-] (C3) -- (C4);
        \draw[-] (C4) -- (C5);
        \draw[-] (C5) -- (C6);
        \draw[-] (A1) -- (B1);
        \draw[-] (A1) -- (C1);
        \draw[-] (A2) -- (B2);
        \draw[-] (A2) -- (C2);
        \draw[-] (A3) -- (B3);
        \draw[-] (A3) -- (C3);
        \draw[-] (A4) -- (B4);
        \draw[-] (A4) -- (C4);
        \draw[-] (A5) -- (B5);
        \draw[-] (A5) -- (C5);
        \draw[-] (A6) -- (B6);
        \draw[-] (A6) -- (C6);
        \draw[-to] (1.5, 3) --node[above]{$\tau_1$} (2,3);
        \draw[color = blue] (A2) circle (7pt);
        \begin{scope}[shift={(3.5,0)}]
        \node (A1) at (0,0) {1};
        \node (A2) at (0,1) {1};
        \node (A3) at (0,2) {3};
        \node (A4) at (0,3) {3};
        \node (A5) at (0,4) {4};
        \node (A6) at (0,5) {6};
        \node (B1) at (-1,1) {3};
        \node (B2) at (-1,2) {3};
        \node (B3) at (-1,3) {4};
        \node (B4) at (-1,4) {6};
        \node (B5) at (-1,5) {7};
        \node (B6) at (-1,6) {9};
        \node (C1) at (1,1) {2};
        \node (C2) at (1,2) {4};
        \node (C3) at (1,3) {4};
        \node (C4) at (1,4) {5};
        \node (C5) at (1,5) {8};
        \node (C6) at (1,6) {8};
        \draw[-] (A1) -- (A2);
        \draw[-] (A2) -- (A3);
        \draw[-] (A3) -- (A4);
        \draw[-] (A4) -- (A5);
        \draw[-] (A5) -- (A6);
        \draw[-] (B1) -- (B2);
        \draw[-] (B2) -- (B3);
        \draw[-] (B3) -- (B4);
        \draw[-] (B4) -- (B5);
        \draw[-] (B5) -- (B6);
        \draw[-] (C1) -- (C2);
        \draw[-] (C2) -- (C3);
        \draw[-] (C3) -- (C4);
        \draw[-] (C4) -- (C5);
        \draw[-] (C5) -- (C6);
        \draw[-] (A1) -- (B1);
        \draw[-] (A1) -- (C1);
        \draw[-] (A2) -- (B2);
        \draw[-] (A2) -- (C2);
        \draw[-] (A3) -- (B3);
        \draw[-] (A3) -- (C3);
        \draw[-] (A4) -- (B4);
        \draw[-] (A4) -- (C4);
        \draw[-] (A5) -- (B5);
        \draw[-] (A5) -- (C5);
        \draw[-] (A6) -- (B6);
        \draw[-] (A6) -- (C6);
        \draw[-to] (1.5, 3) --node[above]{$\tau_2$} (2,3);
        \draw[color = blue] (B1) circle (7pt);
        \draw[color = blue] (B2) circle (7pt);
        \draw[color = blue] (A3) circle (7pt);
        \draw[color = blue] (A4) circle (7pt);
        \draw[color = red] (C1) circle (7pt);
        \end{scope}
        \begin{scope}[shift={(7,0)}]
        \node (A1) at (0,0) {1};
        \node (A2) at (0,1) {1};
        \node (A3) at (0,2) {2};
        \node (A4) at (0,3) {2};
        \node (A5) at (0,4) {4};
        \node (A6) at (0,5) {6};
        \node (B1) at (-1,1) {2};
        \node (B2) at (-1,2) {2};
        \node (B3) at (-1,3) {4};
        \node (B4) at (-1,4) {6};
        \node (B5) at (-1,5) {7};
        \node (B6) at (-1,6) {9};
        \node (C1) at (1,1) {3};
        \node (C2) at (1,2) {4};
        \node (C3) at (1,3) {4};
        \node (C4) at (1,4) {5};
        \node (C5) at (1,5) {8};
        \node (C6) at (1,6) {8};
        \draw[-] (A1) -- (A2);
        \draw[-] (A2) -- (A3);
        \draw[-] (A3) -- (A4);
        \draw[-] (A4) -- (A5);
        \draw[-] (A5) -- (A6);
        \draw[-] (B1) -- (B2);
        \draw[-] (B2) -- (B3);
        \draw[-] (B3) -- (B4);
        \draw[-] (B4) -- (B5);
        \draw[-] (B5) -- (B6);
        \draw[-] (C1) -- (C2);
        \draw[-] (C2) -- (C3);
        \draw[-] (C3) -- (C4);
        \draw[-] (C4) -- (C5);
        \draw[-] (C5) -- (C6);
        \draw[-] (A1) -- (B1);
        \draw[-] (A1) -- (C1);
        \draw[-] (A2) -- (B2);
        \draw[-] (A2) -- (C2);
        \draw[-] (A3) -- (B3);
        \draw[-] (A3) -- (C3);
        \draw[-] (A4) -- (B4);
        \draw[-] (A4) -- (C4);
        \draw[-] (A5) -- (B5);
        \draw[-] (A5) -- (C5);
        \draw[-] (A6) -- (B6);
        \draw[-] (A6) -- (C6);
        \draw[-to] (1.5, 3) --node[above]{$\tau_3$} (2,3);
        \draw[color = blue] (B3) circle (7pt);
        \draw[color = blue] (A5) circle (7pt);
        \draw[color = blue] (C2) circle (7pt);
        \draw[color = blue] (C3) circle (7pt);
        \draw[color = red] (C1) circle (7pt);
        \end{scope}
        \begin{scope}[shift={(10.5,0)}]
        \node (A1) at (0,0) {1};
        \node (A2) at (0,1) {1};
        \node (A3) at (0,2) {2};
        \node (A4) at (0,3) {2};
        \node (A5) at (0,4) {3};
        \node (A6) at (0,5) {6};
        \node (B1) at (-1,1) {2};
        \node (B2) at (-1,2) {2};
        \node (B3) at (-1,3) {3};
        \node (B4) at (-1,4) {6};
        \node (B5) at (-1,5) {7};
        \node (B6) at (-1,6) {9};
        \node (C1) at (1,1) {3};
        \node (C2) at (1,2) {3};
        \node (C3) at (1,3) {4};
        \node (C4) at (1,4) {5};
        \node (C5) at (1,5) {8};
        \node (C6) at (1,6) {8};
        \draw[-] (A1) -- (A2);
        \draw[-] (A2) -- (A3);
        \draw[-] (A3) -- (A4);
        \draw[-] (A4) -- (A5);
        \draw[-] (A5) -- (A6);
        \draw[-] (B1) -- (B2);
        \draw[-] (B2) -- (B3);
        \draw[-] (B3) -- (B4);
        \draw[-] (B4) -- (B5);
        \draw[-] (B5) -- (B6);
        \draw[-] (C1) -- (C2);
        \draw[-] (C2) -- (C3);
        \draw[-] (C3) -- (C4);
        \draw[-] (C4) -- (C5);
        \draw[-] (C5) -- (C6);
        \draw[-] (A1) -- (B1);
        \draw[-] (A1) -- (C1);
        \draw[-] (A2) -- (B2);
        \draw[-] (A2) -- (C2);
        \draw[-] (A3) -- (B3);
        \draw[-] (A3) -- (C3);
        \draw[-] (A4) -- (B4);
        \draw[-] (A4) -- (C4);
        \draw[-] (A5) -- (B5);
        \draw[-] (A5) -- (C5);
        \draw[-] (A6) -- (B6);
        \draw[-] (A6) -- (C6);
        \draw[-to] (1.5, 3) --node[above]{$\tau_4$} (2,3);
        \draw[color = blue] (C4) circle (7pt);
        \draw[color = red] (C3) circle (7pt);
        \end{scope}
        \begin{scope}[shift={(14,0)}]
        \node (A1) at (0,0) {1};
        \node (A2) at (0,1) {1};
        \node (A3) at (0,2) {2};
        \node (A4) at (0,3) {2};
        \node (A5) at (0,4) {3};
        \node (A6) at (0,5) {6};
        \node (B1) at (-1,1) {2};
        \node (B2) at (-1,2) {2};
        \node (B3) at (-1,3) {3};
        \node (B4) at (-1,4) {6};
        \node (B5) at (-1,5) {7};
        \node (B6) at (-1,6) {9};
        \node (C1) at (1,1) {3};
        \node (C2) at (1,2) {3};
        \node (C3) at (1,3) {4};
        \node (C4) at (1,4) {5};
        \node (C5) at (1,5) {8};
        \node (C6) at (1,6) {8};
        \draw[-] (A1) -- (A2);
        \draw[-] (A2) -- (A3);
        \draw[-] (A3) -- (A4);
        \draw[-] (A4) -- (A5);
        \draw[-] (A5) -- (A6);
        \draw[-] (B1) -- (B2);
        \draw[-] (B2) -- (B3);
        \draw[-] (B3) -- (B4);
        \draw[-] (B4) -- (B5);
        \draw[-] (B5) -- (B6);
        \draw[-] (C1) -- (C2);
        \draw[-] (C2) -- (C3);
        \draw[-] (C3) -- (C4);
        \draw[-] (C4) -- (C5);
        \draw[-] (C5) -- (C6);
        \draw[-] (A1) -- (B1);
        \draw[-] (A1) -- (C1);
        \draw[-] (A2) -- (B2);
        \draw[-] (A2) -- (C2);
        \draw[-] (A3) -- (B3);
        \draw[-] (A3) -- (C3);
        \draw[-] (A4) -- (B4);
        \draw[-] (A4) -- (C4);
        \draw[-] (A5) -- (B5);
        \draw[-] (A5) -- (C5);
        \draw[-] (A6) -- (B6);
        \draw[-] (A6) -- (C6);
        \draw[-to] (1.5, 3) --node[above]{$\tau_5$} (2,3);
        \draw[color = blue] (A6) circle (7pt);
        \draw[color = blue] (B4) circle (7pt);
        \draw[color = red] (C4) circle (7pt);
        \end{scope}
        \begin{scope}[shift={(3.5,-7)}]
        \node (A1) at (0,0) {1};
        \node (A2) at (0,1) {1};
        \node (A3) at (0,2) {2};
        \node (A4) at (0,3) {2};
        \node (A5) at (0,4) {3};
        \node (A6) at (0,5) {5};
        \node (B1) at (-1,1) {2};
        \node (B2) at (-1,2) {2};
        \node (B3) at (-1,3) {3};
        \node (B4) at (-1,4) {5};
        \node (B5) at (-1,5) {7};
        \node (B6) at (-1,6) {9};
        \node (C1) at (1,1) {3};
        \node (C2) at (1,2) {3};
        \node (C3) at (1,3) {4};
        \node (C4) at (1,4) {6};
        \node (C5) at (1,5) {8};
        \node (C6) at (1,6) {8};
        \draw[-] (A1) -- (A2);
        \draw[-] (A2) -- (A3);
        \draw[-] (A3) -- (A4);
        \draw[-] (A4) -- (A5);
        \draw[-] (A5) -- (A6);
        \draw[-] (B1) -- (B2);
        \draw[-] (B2) -- (B3);
        \draw[-] (B3) -- (B4);
        \draw[-] (B4) -- (B5);
        \draw[-] (B5) -- (B6);
        \draw[-] (C1) -- (C2);
        \draw[-] (C2) -- (C3);
        \draw[-] (C3) -- (C4);
        \draw[-] (C4) -- (C5);
        \draw[-] (C5) -- (C6);
        \draw[-] (A1) -- (B1);
        \draw[-] (A1) -- (C1);
        \draw[-] (A2) -- (B2);
        \draw[-] (A2) -- (C2);
        \draw[-] (A3) -- (B3);
        \draw[-] (A3) -- (C3);
        \draw[-] (A4) -- (B4);
        \draw[-] (A4) -- (C4);
        \draw[-] (A5) -- (B5);
        \draw[-] (A5) -- (C5);
        \draw[-] (A6) -- (B6);
        \draw[-] (A6) -- (C6);
        \draw[-to] (1.5, 3) --node[above]{$\tau_6$} (2,3);
        \draw[-to] (-2, 3) --node[above]{$\tau_5$} (-1.5,3);
        \draw[color = blue] (B5) circle (7pt);
        \draw[color = red] (C4) circle (7pt);
        \end{scope}
        \begin{scope}[shift={(7,-7)}]
        \node (A1) at (0,0) {1};
        \node (A2) at (0,1) {1};
        \node (A3) at (0,2) {2};
        \node (A4) at (0,3) {2};
        \node (A5) at (0,4) {3};
        \node (A6) at (0,5) {5};
        \node (B1) at (-1,1) {2};
        \node (B2) at (-1,2) {2};
        \node (B3) at (-1,3) {3};
        \node (B4) at (-1,4) {5};
        \node (B5) at (-1,5) {6};
        \node (B6) at (-1,6) {9};
        \node (C1) at (1,1) {3};
        \node (C2) at (1,2) {3};
        \node (C3) at (1,3) {4};
        \node (C4) at (1,4) {7};
        \node (C5) at (1,5) {8};
        \node (C6) at (1,6) {8};
        \draw[-] (A1) -- (A2);
        \draw[-] (A2) -- (A3);
        \draw[-] (A3) -- (A4);
        \draw[-] (A4) -- (A5);
        \draw[-] (A5) -- (A6);
        \draw[-] (B1) -- (B2);
        \draw[-] (B2) -- (B3);
        \draw[-] (B3) -- (B4);
        \draw[-] (B4) -- (B5);
        \draw[-] (B5) -- (B6);
        \draw[-] (C1) -- (C2);
        \draw[-] (C2) -- (C3);
        \draw[-] (C3) -- (C4);
        \draw[-] (C4) -- (C5);
        \draw[-] (C5) -- (C6);
        \draw[-] (A1) -- (B1);
        \draw[-] (A1) -- (C1);
        \draw[-] (A2) -- (B2);
        \draw[-] (A2) -- (C2);
        \draw[-] (A3) -- (B3);
        \draw[-] (A3) -- (C3);
        \draw[-] (A4) -- (B4);
        \draw[-] (A4) -- (C4);
        \draw[-] (A5) -- (B5);
        \draw[-] (A5) -- (C5);
        \draw[-] (A6) -- (B6);
        \draw[-] (A6) -- (C6);
        \draw[-to] (1.5, 3) --node[above]{$\tau_7$} (2,3);
        \draw[color = red] (C4) circle (7pt);
        \draw[color = blue] (C5) circle (7pt);
        \draw[color = blue] (C6) circle (7pt);
        \end{scope}
        \begin{scope}[shift={(10.5,-7)}]
        \node (A1) at (0,0) {1};
        \node (A2) at (0,1) {1};
        \node (A3) at (0,2) {2};
        \node (A4) at (0,3) {2};
        \node (A5) at (0,4) {3};
        \node (A6) at (0,5) {5};
        \node (B1) at (-1,1) {2};
        \node (B2) at (-1,2) {2};
        \node (B3) at (-1,3) {3};
        \node (B4) at (-1,4) {5};
        \node (B5) at (-1,5) {6};
        \node (B6) at (-1,6) {9};
        \node (C1) at (1,1) {3};
        \node (C2) at (1,2) {3};
        \node (C3) at (1,3) {4};
        \node (C4) at (1,4) {7};
        \node (C5) at (1,5) {7};
        \node (C6) at (1,6) {8};
        \draw[-] (A1) -- (A2);
        \draw[-] (A2) -- (A3);
        \draw[-] (A3) -- (A4);
        \draw[-] (A4) -- (A5);
        \draw[-] (A5) -- (A6);
        \draw[-] (B1) -- (B2);
        \draw[-] (B2) -- (B3);
        \draw[-] (B3) -- (B4);
        \draw[-] (B4) -- (B5);
        \draw[-] (B5) -- (B6);
        \draw[-] (C1) -- (C2);
        \draw[-] (C2) -- (C3);
        \draw[-] (C3) -- (C4);
        \draw[-] (C4) -- (C5);
        \draw[-] (C5) -- (C6);
        \draw[-] (A1) -- (B1);
        \draw[-] (A1) -- (C1);
        \draw[-] (A2) -- (B2);
        \draw[-] (A2) -- (C2);
        \draw[-] (A3) -- (B3);
        \draw[-] (A3) -- (C3);
        \draw[-] (A4) -- (B4);
        \draw[-] (A4) -- (C4);
        \draw[-] (A5) -- (B5);
        \draw[-] (A5) -- (C5);
        \draw[-] (A6) -- (B6);
        \draw[-] (A6) -- (C6);
        \draw[-to] (1.5, 3) --node[above]{$\tau_8$} (2,3);
        \draw[color = blue] (B6) circle (7pt);
        \draw[color = red] (C6) circle (7pt);
        \end{scope}
        \begin{scope}[shift={(14,-7)}]
        \node (A1) at (0,0) {1};
        \node (A2) at (0,1) {1};
        \node (A3) at (0,2) {2};
        \node (A4) at (0,3) {2};
        \node (A5) at (0,4) {3};
        \node (A6) at (0,5) {5};
        \node (B1) at (-1,1) {2};
        \node (B2) at (-1,2) {2};
        \node (B3) at (-1,3) {3};
        \node (B4) at (-1,4) {5};
        \node (B5) at (-1,5) {6};
        \node (B6) at (-1,6) {8};
        \node (C1) at (1,1) {3};
        \node (C2) at (1,2) {3};
        \node (C3) at (1,3) {4};
        \node (C4) at (1,4) {7};
        \node (C5) at (1,5) {7};
        \node (C6) at (1,6) {9};
        \draw[-] (A1) -- (A2);
        \draw[-] (A2) -- (A3);
        \draw[-] (A3) -- (A4);
        \draw[-] (A4) -- (A5);
        \draw[-] (A5) -- (A6);
        \draw[-] (B1) -- (B2);
        \draw[-] (B2) -- (B3);
        \draw[-] (B3) -- (B4);
        \draw[-] (B4) -- (B5);
        \draw[-] (B5) -- (B6);
        \draw[-] (C1) -- (C2);
        \draw[-] (C2) -- (C3);
        \draw[-] (C3) -- (C4);
        \draw[-] (C4) -- (C5);
        \draw[-] (C5) -- (C6);
        \draw[-] (A1) -- (B1);
        \draw[-] (A1) -- (C1);
        \draw[-] (A2) -- (B2);
        \draw[-] (A2) -- (C2);
        \draw[-] (A3) -- (B3);
        \draw[-] (A3) -- (C3);
        \draw[-] (A4) -- (B4);
        \draw[-] (A4) -- (C4);
        \draw[-] (A5) -- (B5);
        \draw[-] (A5) -- (C5);
        \draw[-] (A6) -- (B6);
        \draw[-] (A6) -- (C6);
        \end{scope}
    \end{tikzpicture}
    \caption{The steps of $\Pro$ when the Bender--Knuth involutions are applied to $f$ of Figure~\ref{fig:V69Example}. At each step in the computation of $\Pro$, the raisable $k$ labels are circled in red and the lowerable $k+1$ labels are circled in blue.}
    \label{fig:Examplepromotion}
\end{figure}
\subsection{Rowmotion}\label{sec:rowmotion}

We now review rowmotion on the order polytope of $P$ and consequently on bounded $P$-partitions. We first define the order polytope of $P$, following the description given in~\cite{stanley1986two}. Let $\hat{P}$ denote the poset obtained from $P$ by adjoining a new mininmal element $\hat{0}$ and a new maximal element $\hat{1}. $
\begin{definition}[{\cite{stanley1986two}}]
	For a poset $P$ the \emph{order polytope} of $P$ is \[\O(P)=\{f:\hat{P}\to [0,1]|\text{ if }p\le_{\hat{P}} p' \text{ then } f(p)\le f(p')\text{ and } f(\hat{0})=0, f(\hat{1})=1\}.\] Equivalently $\O(P)$ is the set of order preserving functions from $P$ to $[0,1]$.
\end{definition}

We now define rowmotion on $\O(P)$ and on $P$-partitions, where we follow an amalgamation of the treatments given in~\cite{einstein2021combinatorial} and~\cite{bernstein2024p}. For each $p\in P$, we define the toggle at $p$, denoted by $\tau_p$, to be $\tau_p:\O(P)\to \O(P)$ where for any $f\in \O(P)$ and $p'\in P$ \[\tau_p(f)(p') = \begin{cases} f(p') & p'\neq p\\ \min\{f(r) | p'\lessdot r\}+\max\{f(r) | r\lessdot p'\}-f(p') & p' = p
\end{cases}\]
The following facts about the toggles follow immediately. Firstly the toggles are in fact involutions, as the toggle $\tau_p$ just reflects the value of the coordinate indexed by $p$ in the interval of possible values. Secondly, just as was shown for the combinatorial case in~\cite{cameron1995orbits} $p$ and $p'$ do not share a cover relation if and only if the toggles $\tau_p$ and $\tau_{p'}$ commute. Additionally if for any $\ell\in \Z$ and $f\in \O(P)$ such that $\ell f$ is integer valued then for any $p\in P, \ell\tau_p(f)$ integer valued is as well. As such, for every integer $\ell$, we may talk about the action of the toggles restricted to the elements $f\in \O(P)$ such that $\ell f$ is integer valued. Note that these functions are just the maps from $P\to \{0,1,\dots, \ell\}$ that are order preserving, otherwise known as $\ell$-bounded $P$-partitions. We denote these by $\PP^{\ell}(P)$. 
\begin{definition}\label{def:rowmotion}
    Let $(p_1,p_2, \dots, p_m)$ be a linear extension of $P$. Then rowmotion on $\O(P)$, and consequently on $\PP^\ell(P)$, otherwise known as $\emph{piecewise-linear rowmotion}$ is defined as $\row = \tau_{p_1}\circ \tau_{p_2}\circ \cdots \circ \tau_{p_m}$.
\end{definition}

For our purposes the primary relation between $P$-strict promotion and rowmotion that we will use is the following result.
\begin{prop}[{\cite[Corollary 2.26]{bernstein2024p}}]\label{prop:corconj}
    Let $P$ be a graded poset of rank $n$. Then $\mathcal{L}_{P\times [\ell]}(R^q)$ under $\Pro$ is in equivariant bijection with $\PP^\ell(P\times[q-(n+1)])$ under $\row$.
\end{prop}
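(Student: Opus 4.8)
The plan is to factor the statement through the piecewise-linear toggle group: realize $P$-strict promotion as a product of piecewise-linear toggles on $\PP^\ell(P\times[m])$, and then convert that product into $\row$ by the promotion--rowmotion conjugacy. Write $m=q-(n+1)$ for the length of the chain factor. Since $P$ is graded of rank $n$ and $R=R^q$, a short computation gives $R^q(p)=\{\mathrm{rk}(p)+1,\dots,\mathrm{rk}(p)+(q-n)\}$, an interval of $q-n$ consecutive integers; thus along each fiber $F_p$ the labels weakly increase through $q-n$ possible values, separated by exactly $m=q-n-1$ internal ``walls.''

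First I would construct a bijection $\Phi\colon \mathcal{L}_{P\times[\ell]}(R^q)\to \PP^\ell(P\times[m])$ by transposing each fiber. Normalizing $h(p,i)=f(p,i)-\mathrm{rk}(p)\in\{1,\dots,q-n\}$, conditions (1)--(3) say exactly that $h$ is order preserving from $P\times[\ell]$ to the chain $\{1<\cdots<q-n\}$. For a threshold $s\in[m]$ I set
\[
\Phi(f)(p,s)=\#\{\,i\in[\ell] : f(p,i)\ge \mathrm{rk}(p)+(m-s)+2\,\},
\]
the number of entries of $F_p$ lying above the $(m{+}1{-}s)$-th wall. One checks directly that $\Phi(f)$ is order preserving in both coordinates of $P\times[m]$ and lands in $\{0,\dots,\ell\}$, and that reading off, fiber by fiber, the level sets of $\Phi(f)$ recovers the weakly increasing sequence $h(p,1)\le\cdots\le h(p,\ell)$. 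Hence $\Phi$ is a bijection. This is the classical conjugation of a lattice point of a product-of-chains order polytope, and is routine bookkeeping.

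The heart of the argument is a local lemma identifying each Bender--Knuth involution with a product of commuting toggles. Fix $k$ and let $D_k$ be the set of $(p,s)$ for which the wall read by $\Phi(\cdot)(p,s)$ separates labels $k$ and $k+1$; explicitly $D_k$ meets each fiber in at most one point $(p,s_p)$ with $s_p=m+1-k+\mathrm{rk}(p)$, and there $\Phi(f)(p,s_p)=\#\{i:f(p,i)\ge k+1\}$. I would prove $\Phi\circ\tau_k=T_k\circ\Phi$, where $T_k=\prod_{(p,s)\in D_k}\tau_{(p,s)}$ is the product of piecewise-linear toggles over $D_k$. Two points must be verified: (i) the $P\times[m]$-ranks of the elements of $D_k$ are all $\equiv m-k\pmod 2$, so no two share a cover relation and the toggles pairwise commute, making $T_k$ a well-defined involution; and (ii) on each fiber the single coordinate $\#\{i:f(p,i)\ge k+1\}$ is reflected inside the interval cut out by its neighbours exactly as the rule of Definition~\ref{def:BenderKnuth} (replacing $a$ free $k$'s followed by $b$ free $(k{+}1)$'s by $b$ copies of $k$ followed by $a$ copies of $k+1$) prescribes. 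The content of (ii) is that a label is raisable or lowerable precisely according to the values at the covers of $(p,s_p)$ in $P\times[m]$, so the $\min$/$\max$ of the toggle formula match the counts of non-free labels bounding $a$ and $b$; this reconciliation of the global Bender--Knuth rule with a strictly local toggle is the most delicate computation.

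Granting the lemma, from $\Pro=\cdots\tau_3\tau_2\tau_1\cdots$ I obtain $\Phi\circ\Pro=(\cdots T_3T_2T_1\cdots)\circ\Phi$, and the right-hand operator uses every piecewise-linear toggle of $P\times[m]$ exactly once in ``file order.'' Here the $D_k$ are the fibers of $(p,s)\mapsto s-\mathrm{rk}(p)$ while the ranks of $P\times[m]$ are the fibers of $(p,s)\mapsto s+\mathrm{rk}(p)$; since every cover increases $s+\mathrm{rk}(p)$ by one and changes $s-\mathrm{rk}(p)$ by $\pm1$, the poset $P\times[m]$ carries the rc-structure of Striker--Williams with the $D_k$ as files and the ranks as rows, so $\cdots T_3T_2T_1\cdots$ is piecewise-linear promotion. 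The promotion--rowmotion conjugacy in the (piecewise-linear) toggle group then yields a toggle-group element $\gamma$ with $\cdots T_3T_2T_1\cdots=\gamma\,\row\,\gamma^{-1}$, whence $\Psi:=\gamma^{-1}\circ\Phi$ satisfies $\Psi\circ\Pro=\row\circ\Psi$, the desired equivariant bijection. I expect the main obstacle to be precisely this conjugacy for a general graded base $P$: the map $(p,s)\mapsto(s+\mathrm{rk}(p),\,s-\mathrm{rk}(p))$ is injective only when each rank of $P$ is a single element, so for posets such as $\V$ distinct incomparable elements of equal rank collide in the grid. I would resolve this by noting that colliding elements share a rank of $P\times[m]$, hence are incomparable and their toggles commute, so that the Striker--Williams reorganization of a rank-ordered toggle word into a file-ordered one, together with its piecewise-linear extension via Einstein--Propp, applies verbatim once ``row'' and ``file'' toggles are read as the commuting products $t_{\mathrm{row}}$ and $T_k$.
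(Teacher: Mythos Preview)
The paper does not prove this proposition; it is quoted from \cite{bernstein2024p} as a black box in the background section, with only the remark that ``this equivariant bijection passes through a map we will see later that is called TogPro.'' Your proposal is essentially a reconstruction of the argument in that reference, and it lines up with what the paper invokes: your fiber-transpose map $\Phi$ together with the local lemma $\Phi\circ\tau_k=T_k\circ\Phi$ identifies $\Pro$ with the operator the paper later names $\text{TogPro}$ (indeed your formula $s_p=m+1-k+\mathrm{rk}(p)=q-n+\mathrm{rk}(p)-k$ matches the paper's Definition of $\text{TogPro}$ exactly), and the final step is the piecewise-linear promotion--rowmotion conjugacy in the toggle group. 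Your caution about the rc-embedding for a base $P$ with multiple elements per rank is well placed and your resolution is the right one: elements of $P\times[m]$ that collide under $(p,s)\mapsto(s+\mathrm{rk}(p),s-\mathrm{rk}(p))$ share a rank, hence are incomparable, so the Striker--Williams rewriting of a rank-ordered toggle word into a file-ordered one goes through unchanged once one works with the commuting products $T_k$ and the rank products rather than individual toggles.
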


We note that this equivariant bijection passes through a map we will see later that is called \text{TogPro}. Additionally the equivariant bijection between TogPro and $\row$ does not depend on any linear extension of $P$ and is thus invariant under any automorphism.

As a consequence of the above proposition, by proving Theorem~\ref{thm:Main} for all $\ell$ and $q$ we will have shown that the order of $\row$ on the rational points of $\O(\V\times [q-2])$ has order dividing $2q$, so the order of $\row$ on $\O(\V\times [q-2])$ has order dividing $2q$ by a density argument. 
\subsection{Kreweras Words and Promotion}
We now discuss Kreweras words, which were originally considered by Kreweras in~\cite{kreweras1965classe} as a variant on the three candidate generalization of the ballot problem. Our immediate goal is to generalize Kreweras words and their associated promotion action, as considered by Hopkins and Rubey in~\cite{hopkins2022promotion}. The purpose of these will be to help understand promotion on $\mathcal{L}_{\V\times [\ell]}(R^q)$.
\begin{definition}[{\cite{hopkins2022promotion}}]\label{def:KrewerasWord}
    A \emph{Kreweras word} of length $3n$ is a word in letters $A,B,C$ with equally many $A's$, $B'$s, and $C's$ for which every prefix has at least as many $A's$ as $B's$ and also at least as many $A's$ as $C's$.
\end{definition}

Additionally, these words have an action upon them called \emph{promotion}, which is defined as follows.
\begin{definition}[\cite{hopkins2022promotion}]\label{def:PromotionOfKrewerasWords}
    Let $w=(w_1,w_2,\ldots,w_{3n})$ be a Kreweras word of length $3n$. The \emph{promotion} of~$w$, denoted $\Pro(w)$, is obtained from $w$ as follows. Let $\iota(w)$ be the smallest index $\iota \geq 1$ for which the prefix $(w_1,w_2,\ldots,w_{\iota})$ has either the same number of $A$'s as $B$'s or the same number of $A$'s as $C$'s. Then
\[ \Pro(w)  =  (w_2,w_3,\ldots,w_{\iota(w)-1}, A, w_{\iota(w)+1},  w_{\iota(w)+2}, \ldots, w_{3n}, w_{\iota(w)}).\]

\end{definition}

\begin{figure}[h]
    \centering
   \begin{tikzpicture}[scale=0.75]
\def \x {0.7};
\node at (-1*\x,-.55*\x) {$w=$};
\node at (1*\x,-0.55*\x) {A};
\node at (2*\x,-0.55*\x) {C};
\node at (3*\x,-0.55*\x) {A};
\node at (4*\x,-0.55*\x) {B};
\node at (5*\x,-0.55*\x) {B};
\node at (6*\x,-0.55*\x) {A};
\node at (7*\x,-0.55*\x) {A};
\node at (8*\x,-0.55*\x) {B};
\node at (9*\x,-0.55*\x) {C};
\node at (10*\x,-0.55*\x) {C};
\node at (11*\x,-0.55*\x) {A};
\node at (12*\x,-0.55*\x) {C};
\node at (13*\x,-0.55*\x) {B};
\node at (14*\x,-0.55*\x) {A};
\node at (15*\x,-0.55*\x) {B};
\node at (16*\x,-0.55*\x) {C};
\node at (17*\x,-0.55*\x) {C};
\node at (18*\x,-0.55*\x) {B};

\node at (1*\x,-0.55*\x-1) {A};
\node at (18*\x,-0.55*\x-1) {C};
\node at (2*\x,-0.55*\x-1) {A};
\node at (3*\x,-0.55*\x-1) {B};
\node at (4*\x,-0.55*\x-1) {B};
\node at (5*\x,-0.55*\x-1) {A};
\node at (6*\x,-0.55*\x-1) {A};
\node at (7*\x,-0.55*\x-1) {B};
\node at (8*\x,-0.55*\x-1) {C};
\node at (9*\x,-0.55*\x-1) {C};
\node at (10*\x,-0.55*\x-1) {A};
\node at (11*\x,-0.55*\x-1) {C};
\node at (12*\x,-0.55*\x-1) {B};
\node at (13*\x,-0.55*\x-1) {A};
\node at (14*\x,-0.55*\x-1) {B};
\node at (15*\x,-0.55*\x-1) {C};
\node at (16*\x,-0.55*\x-1) {C};
\node at (17*\x,-0.55*\x-1) {B};

\node at (-1*\x,-.55*\x-1) {$\Pro(w)=$};

\end{tikzpicture}
    \caption{The Kreweras word $w$ whose associated linear extension is given in Figure~\ref{fig:Pro} and $\Pro(w)$.}
    \label{fig:krew}
\end{figure}

It is easy to verify that $\Pro(w)$ is also a Kreweras word, and that promotion is an invertible action on the set of Kreweras words. Linear extensions of $\V\times [n]$ correspond to Kreweras words of length $3n$ as follows. 

If $l$ is a linear extension of $\V\times [n]$ and $l^{-1}(i)=(p,k)$ then $w_i =p$. As noted in \cite{hopkins2022promotion}, this is the same as just forgetting the second coordinate when viewing a linear extension as a word in the letters $A,B,C$. Importantly, as Hopkins and Rubey show in the following Proposition, the promotion actions on Kreweras words of length $3n$ and linear extensions of $\V\times [n]$ are the same.
\begin{prop}[{\cite[Proposition 2.2]{hopkins2022promotion}}]\label{prop:KrewerasWordPromotionIntertwined}
The above map of forgetting the second coordinate is a bijection from linear extensions of~$\V\times[n]$ to Kreweras words of length $3n$, and under this bijection, promotion of linear extensions corresponds to promotion of Kreweras words.
\end{prop}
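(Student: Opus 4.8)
First I would check that the word $w=w(l)$ attached to a linear extension $l$ of $\V\times[n]$ is a Kreweras word. Since each fiber $F_A,F_B,F_C$ has exactly $n$ elements, $w$ has $n$ copies of each letter. For the prefix condition, note that the prefix $(w_1,\dots,w_i)$ records the first coordinates of the order ideal $S_i=\{l^{-1}(1),\dots,l^{-1}(i)\}$, so the number of $A$'s (resp.\ $B$'s, $C$'s) in the prefix equals $|S_i\cap F_A|$ (resp.\ $|S_i\cap F_B|$, $|S_i\cap F_C|$). Because each fiber is a chain and $S_i$ is an order ideal, $S_i\cap F_A=\{(A,1),\dots,(A,\alpha)\}$ and $S_i\cap F_B=\{(B,1),\dots,(B,\beta)\}$, while $(B,\beta)\in S_i$ forces $(A,\beta)\in S_i$ through $(A,\beta)<(B,\beta)$, whence $\alpha\ge\beta$; the $C$ case is identical. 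To invert, I send a Kreweras word $w$ to the labeling with $l^{-1}(i)=(w_i,\,\#\{i'\le i: w_{i'}=w_i\})$, i.e.\ the $j$-th occurrence of a letter $p$ becomes $(p,j)$; this is forced because within a fiber a linear extension must list $(p,1),(p,2),\dots$ in order. I would then verify order-preservation: fiber cover relations hold by construction, and the cross relations $(A,k)<(B,k)$, $(A,k)<(C,k)$ amount exactly to the $k$-th $A$ preceding the $k$-th $B$ (resp.\ $C$), which is precisely the prefix condition. The two maps are visibly mutually inverse.

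\textbf{Reducing the intertwining to the promotion path.} For the second assertion I would replace the description $\Pro=t_{m-1}\cdots t_1$ by the classically equivalent jeu-de-taquin (sliding) description: delete the label $1$ from the minimum $(A,1)$, repeatedly slide the smallest-labeled cover into the hole until it reaches a maximal element, decrement every label by $1$, and place $3n$ in the final hole. The labels along the resulting \emph{promotion path} increase, so the path is determined by the labels of $l$. Since for $k<n$ the only cover of $(C,k)$ (resp.\ $(B,k)$) is $(C,k+1)$ (resp.\ $(B,k+1)$), once the path leaves the $A$-fiber into $B$ or $C$ it proceeds straight to the top of that fiber. Hence the path always has the form $(A,1),(A,2),\dots,(A,k_0),(X,k_0),(X,k_0+1),\dots,(X,n)$ for some level $k_0$ and some $X\in\{B,C\}$.

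\textbf{Identifying the branch (the crux).} The technical heart is to prove $X=w_{\iota}$ and $k_0=a_{\iota}$, where $\iota=\iota(w)$ and $a_j,b_j,c_j$ count the $A$'s, $B$'s, $C$'s in $(w_1,\dots,w_j)$. At node $(A,k)$ the path continues up the $A$-fiber exactly when $l(A,k+1)<l(B,k)$ and $l(A,k+1)<l(C,k)$, and I would translate each comparison into prefix data: $l(B,k)<l(A,k+1)$ holds iff the $k$-th $B$ sits at a position with $a_j=b_j$ (an ``equality-$B$''), and symmetrically for $C$. Thus the path branches at the first level admitting an equality-$B$ or equality-$C$. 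Assuming $w_\iota=B$ (the $C$ case is symmetric), the $k_0$-th $B$ with $k_0=a_\iota=b_\iota$ lies at position $\iota$; since every position before $\iota$ has strict $a_j>b_j$ and $a_j>c_j$, no branch can occur at a level $j<k_0$, because any such branch would force the relevant $B$ or $C$ to precede the $(j+1)$-st $A$ and hence to occur before $\iota$, contradicting minimality of $\iota$. At level $k_0$ the $k_0$-th $B$ (at $\iota$) precedes both the $(k_0+1)$-st $A$ and the $k_0$-th $C$ (the latter since $c_\iota\le k_0-1$), so the path branches into $B$. I expect this position-versus-level bookkeeping to be the main obstacle, as the natural orderings on the two sides differ and premature branching into the other fiber must be excluded.

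\textbf{Reading off the new word.} Finally I would recover the word of the promoted extension from the path. Off-path elements have their label lowered by $1$, contributing $w'_j=w_{j+1}$; and since every element of the $A$-segment has first coordinate $A$ and every element of the $X$-segment has first coordinate $X$, the internal slides along each segment also give $w'_j=w_{j+1}$. The only exceptions are the transition, where $(A,k_0)$ receives the label formerly at $(X,k_0)=l^{-1}(\iota)$, yielding $w'_{\iota-1}=A$, and the top, where $(X,n)$ receives label $3n$, yielding $w'_{3n}=X=w_\iota$. Assembling these gives $w'=(w_2,\dots,w_{\iota-1},A,w_{\iota+1},\dots,w_{3n},w_\iota)$, which is exactly $\Pro(w)$ by Definition~\ref{def:PromotionOfKrewerasWords}, completing the intertwining.
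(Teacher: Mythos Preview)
The paper does not prove this proposition; it is quoted from Hopkins and Rubey and cited without argument. So there is no in-paper proof to compare against, and your attempt stands on its own.

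Your proof is correct and is essentially the natural one. The bijection half is routine and cleanly done. For the intertwining you pass to the jeu-de-taquin (sliding) description of promotion, observe that once the promotion path leaves the $A$-fiber it runs straight to the top of a single $B$- or $C$-fiber, and then match the branch level $k_0$ and branch letter $X$ to $a_{\iota(w)}$ and $w_{\iota(w)}$. The only place that deserves a little more ink is the ``no premature branching'' step: you assert that a branch at some level $k<k_0$ would force the relevant $k$-th $B$ or $C$ to precede the $(k{+}1)$-st $A$ and hence to sit at a position before $\iota$. That is true, but you should make explicit why the $(k{+}1)$-st $A$ itself sits before $\iota$ (namely $k+1\le k_0=a_\iota$ and $w_\iota\neq A$), so that the reader sees how the equality $a_j=b_j$ or $a_j=c_j$ is actually produced at some $j<\iota$, contradicting minimality. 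Likewise, in the branching step it is worth recording that $c_\iota<k_0$ (from $a_{\iota-1}>c_{\iota-1}$ and $w_\iota=B$), which is what guarantees $l(C,k_0)>\iota$ and hence that the path enters $B$ rather than $C$. With those two small expansions the argument is complete.
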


An additional perspective on these words and how promotion acts is via what is called the \emph{Kreweras bump diagram}, described in \cite{hopkins2022promotion}. To properly state the definition, we include the relevant definitions from \cite{hopkins2022promotion} below.

\begin{definition}[{\cite[Definition \ 3.2]{hopkins2022promotion}}]
  An \emph{arc} is a pair $(i,j)$ of positive integers with $i < j$.  A \emph{crossing} is a set $\{(i,j),(k,\ell)\}$ of two arcs such that $i\leq k < j < \ell$.
\end{definition}

\begin{definition}[{\cite[Definition\ 3.3]{hopkins2022promotion}}]
  Let $\mathcal{A}$ be a collection of arcs. For a set of positive integers  $S$, we say that $\mathcal{A}$ is a \emph{noncrossing matching of $S$} if 
  \begin{itemize}
      \item for every $(i,j)\in \mathcal{A}$ we have $i,j \in S$
      \item every $i\in S$ belongs to a unique arc in~$\mathcal{A}$
      \item no two arcs in $\mathcal{A}$ form a crossing.
  \end{itemize}
  The set of \emph{openers of $\mathcal{A}$} is $\{i\colon (i,j)\in \mathcal{A}\}$ and the set the set of \emph{closers of~$\mathcal{A}$} is $\{j\colon (i,j)\in \mathcal{A}\}$.
\end{definition}
\begin{definition}[{\cite[Definition 3.4]{hopkins2022promotion}}]\label{def:KrewerasBumpDiagram}
Let $w$ be a Kreweras word of length $3n$. Let $\varepsilon \in \{\textrm{$B$},\textrm{$C$}\}$, where $-\varepsilon$ denotes the other element of $\{\textrm{$B$},\textrm{$C$}\}$. We use~$\M^\varepsilon_w$ to denote the noncrossing matching of $\{i\in [3n]\colon w_i\neq -\varepsilon\}$ whose set of openers is~$\{i\in [3n]\colon w_i=\textrm{$A$}\}$ and whose set of closers is~$\{i\in [3n]\colon w_i=\varepsilon\}$.

The \emph{Kreweras bump diagram} $\D_w$ of $w$ is obtained by placing the numbers $1,\ldots,3n$ in order on a line, and drawing a semicircle above the line connecting $i$ and $j$ for each arc~$(i, j)\in \M^{B}_w \cup \M^{C}_w$.  The arc is solid {\bf b}lue if~$(i,j)\in\M^{B}_w$ and dashed {\bf c}rimson (i.e.,~red) if~$(i,j) \in \M^{C}_w$. The arcs are drawn in such a fashion that only pairs of arcs which form a crossing intersect and any two arcs intersect at most once.
\end{definition}

In the proof of the order of promotion on linear extensions of $\V\times [n]$~\cite[Theorem \ 1.2]{hopkins2022promotion}, the Kreweras bump diagram plays a central role. By considering a local rule at the crossings of arcs in the diagram, called the \emph{rules of the road}~\cite[Definition 3.6]{hopkins2022promotion}, Hopkins and Rubey construct a permutation of $3n$, denoted by $\sigma_w$, called the \emph{trip permutation}~\cite[Definition 3.6]{hopkins2022promotion} of $w$. First they showed that $\sigma_w$ together with a sequence of $B$'s and $C$'s coming from $w$ and $\sigma_w$, called $\varepsilon_w$, can uniquely recover $w$. They then show that $\Pro$ corresponds to a rotation of order $3n$ on $\sigma_w$ and a rotation of order $6n$ on $\varepsilon_w$, implying the order of $\Pro$ on $\V\times [n]$ divides $6n$. 

For our purposes, we introduce a generalization the Kreweras bump diagram. It will suffice to decompose our generalizations of Kreweras words by the corresponding arc structure of the generalized diagrams. Once we have these generalizations, we will relate and describe $P$-strict promotion on $\mathcal{L}_{\V\times[\ell]}(R^q)$ in terms of promotion of Kreweras words without any analogues of the rules of the road or trip permutations.

\begin{figure}[htb]
    \centering
\begin{tikzpicture}[scale=0.85]
\def \x {0.7};
\node at (-1*\x,0) {$\D_w=$};
\node (1) at (1*\x,0) {1};
\node (2) at (2*\x,0) {2};
\node (3) at (3*\x,0) {3};
\node (4) at (4*\x,0) {4};
\node (5) at (5*\x,0) {5};
\node (6) at (6*\x,0) {6};
\node (7) at (7*\x,0) {7};
\node (8) at (8*\x,0) {8};
\node (9) at (9*\x,0) {9};
\node (10) at (10*\x,0) {10};
\node (11) at (11*\x,0) {11};
\node (12) at (12*\x,0) {12};
\node (13) at (13*\x,0) {13};
\node (14) at (14*\x,0) {14};
\node (15) at (15*\x,0) {15};
\node (16) at (16*\x,0) {16};
\node (17) at (17*\x,0) {17};
\node (18) at (18*\x,0) {18};
\node at (1*\x,-0.55*\x) {A};
\node at (2*\x,-0.55*\x) {C};
\node at (3*\x,-0.55*\x) {A};
\node at (4*\x,-0.55*\x) {B};
\node at (5*\x,-0.55*\x) {B};
\node at (6*\x,-0.55*\x) {A};
\node at (7*\x,-0.55*\x) {A};
\node at (8*\x,-0.55*\x) {B};
\node at (9*\x,-0.55*\x) {C};
\node at (10*\x,-0.55*\x) {C};
\node at (11*\x,-0.55*\x) {A};
\node at (12*\x,-0.55*\x) {C};
\node at (13*\x,-0.55*\x) {B};
\node at (14*\x,-0.55*\x) {A};
\node at (15*\x,-0.55*\x) {B};
\node at (16*\x,-0.55*\x) {C};
\node at (17*\x,-0.55*\x) {C};
\node at (18*\x,-0.55*\x) {B};
\draw [ultra thick,blue] (1) to [out=90, in=90] (5);
\draw [ultra thick,blue] (3) to [out=90, in=90] (4);
\draw [ultra thick,blue] (6) to [out=90, in=90] (18);
\draw [ultra thick,blue] (7) to [out=90, in=90] (8);
\draw [ultra thick,blue] (11) to [out=90, in=90] (13);
\draw [ultra thick,blue] (14) to [out=90, in=90] (15);
\draw [ultra thick,red,dashed] (1) to [out=90, in=90] (2);
\draw [ultra thick,red,dashed] (3) to [out=90, in=90] (17);
\draw [ultra thick,red,dashed] (6) to [out=90, in=90] (10);
\draw [ultra thick,red,dashed] (7) to [out=90, in=90] (9);
\draw [ultra thick,red,dashed] (11) to [out=90, in=90] (12);
\draw [ultra thick,red,dashed] (14) to [out=90, in=90] (16);
\end{tikzpicture}
    \label{fig:ExampleKrewerasBumpDiagram}
    \caption{The Kreweras bump diagram of the word $w=ACABBAABCCACBABCCB$}
\end{figure}

\section{Proof of Main Theorem}\label{sec:Proof}
We now provide the proof of Theorem~\ref{thm:Main}. In a very broad sense, the idea will be to use $P$-strict labelings as a semistandard analogue of linear extensions of $\V\times [n]$ and show that the question of $P$-strict promotion can be reduced to the case of promotion on linear extensions $\V\times [n]$. For added readability, the argument is broken into subsections of similarly related ideas within the overall proof. 
\subsection{Partial Multi Kreweras Words}
To begin, we define the previously alluded to generalization of the Kreweras word. These objects will be our combinatorial model for $P$-strict promotion on $\mathcal{L}_{\V\times [\ell]}(R^q)$. 
\begin{definition}\label{def:KrewWordGen}
	An \emph{$(\ell,q)$-partial multi Kreweras word} is a sequence $w=w_1w_2\dots w_q$ of $q$, potentially empty, multisets of $\{A,B,C\}$ subject to the following conditions. For each $i$ neither the number of $B's$ nor the number of $C's$ in $w_1w_2\dots w_i$ exceeds the number of $A's$ in $w_1w_2\dots w_{i-1}$. Additionally there are $\ell$ of each of $A,B$, and $C$. 
 
We call $w_i$ the $i$th \emph{block} of $w$.
\end{definition}
The collection of $(\ell,q)$-partial multi Kreweras words is in bijection correspondence with $\mathcal{L}_{\V\times[\ell]}(R^q)$ via the map $W$, where $W$ takes a word $w$ to a $\V$-strict labeling as follows. For each $p\in \V$ the number of instances of $p$ in $w_i$ is the number of labels in $F_p$, the fiber above $p$, that are equal to $i$.

When writing one of these words, we will always place the $A's$ in $w_i$ after the $B's$ and/or $C's$. 
Unless otherwise specified we will ignore the order of the $B's$ and $C's$. Additionally, if $w_i = \emptyset$ we write $\emptyset$ in the $i$th position. 

\begin{figure}[htb]\label{fig:LQPMKWofF}
    \centering
    \begin{tikzpicture}
        \node (F) at (0,0) {\begin{tabular}{c c c c c c c c c} A & CA & BBAA & BCCA & C & BA & B & CC &B\\
        1 & 2 & 3 & 4 & 5 & 6 & 7 & 8 & 9
        \end{tabular}};
    \end{tikzpicture}
    \caption{The associated (6,9)-partial multi Kreweras word associated to $f$ of Figure~\ref{fig:V69Example} with the index of $w_i$ written below.}
\end{figure}

We define the actions of the Bender--Knuth involutions, and thus promotion, on these words as follows. For $1\le k \le q-1$,  define $\tau_k(w) := W^{-1}\circ \tau_k\circ W(w)$ and $\Pro(w) = \tau_{q-1}\tau_{q-2}\dots \tau_1(w)$. 

At the level of the word $w$, $\tau_k$ swaps some $A's$, $B's$, and $C's$ between $w_k$ and $w_{k+1}$. It is always possible to swap an $A$ in $w_{k+1}$ to $w_k$ and it is always possible to swap a $B$ or $C$ in $w_k$ to $w_{k+1}$. There is only one way an $A$ in $w_k$ cannot be swapped to $w_{k+1}$ or a $B$ (or $C$) in $w_{k+1}$ cannot be swapped to $w_k$. This is when $w_k$ contains the $i, i+1, \dots, j$th $A$'s of $w$ and $w_{k+1}$ contains the $s, s+1, \dots, t$th $B's$ (or $C$'s) with $[i,j]\cap [s,t]\neq \emptyset$. The reason is that for each $r \in [i,j]\cap [s,t]$ in $W(w)$, $f(A,r)=k, f(B,r)= k+1$, so neither of these labels are free, as in Definition~\ref{def:BenderKnuth}. 

To describe how $\Pro$ impacts $w$, we introduce a generalization of the Kreweras bump diagram. 

\begin{definition}\label{def:KrewasWordBumpDiagramGen}
    Given an $(\ell,q)$-partial multi Kreweras word $w$, linearly order the $A's$ within each block, where the $A's$ follow the $B's$ and $C's.$ Draw the noncrossing arc diagrams as in Definition~\ref{def:KrewerasBumpDiagram} between the $A's$ and $B's$ and the $A's$ and $C's$ using this linear ordering within each block, where the number of arcs in the diagram between the $A's$ and $B$'s of the form $(i,j)$, with $i < j$ are the number of $B$'s in $w_j$. This is just to say that we have degenerate crossings where there can be multiple arcs whose right endpoints share the same location. We call the resulting diagram the \emph{generalized bump diagram of }$w$ and we denote the generalized bump diagram of $w$ by $\D_w$ following the notation of \cite{hopkins2022promotion}.

    Additionally we call the instances where an $A$ has arcs to a $B$ and $C$ in the same block a \emph{double arc}.
\end{definition}

We denote this linear ordering by subscripting the $A$'s. 

\begin{definition}\label{def:DiagramInducedMatchingPartition}
        Suppose $f\in \mathcal{L}_{\V\times [\ell]}(R^q)$ and $w$ is the associated $(\ell,q)$-partial multi Kreweras word. Further suppose that $w$ has generalized bump diagram $\D_w$. For each $i\in [\ell]$, if $A_i$ is in block $w_{a_i}$ and $A_i$ has arcs to a $B$ and $C$, in blocks $w_{b_{i}}, w_{c_{i}}$ respectively, define $L'_i$ to be the $P$-strict labeling of $\V$ with $L'_i(A) = a_i, L'_i(B) = b_i, L'_i(C)= c_i$. We call the multiset of $\V$-strict labelings obtained from $w$ in this way the \emph{noncrossing layer decomposition of }$f$.
\end{definition}

\begin{figure}[htb]
    \centering
\begin{tikzpicture}[scale=0.85]
\def \x {0.7};
\def \y {1.6};
\node at (-1*\x,0) {$\D_w=$};
\node (1) at (1*\y,0) {1};
\node (2) at (2*\y,0) {2};
\node (3) at (3*\y,0) {3};
\node (4) at (4.25*\y,0) {4};
\node (5) at (5.24*\y,0) {5};
\node (6) at (6.25*\y,0) {6};
\node (7) at (7.25*\y,0) {7};
\node (8) at (8.25*\y,0) {8};
\node (9) at (9.25*\y,0) {9};
\node (A1) at (1*\y,0.55*\x) {A${}_1$};
\node (L2)at (2*\y-.25*\x,0.55*\x) {C};
\node (A2) at (2*\y+.05+.25*\x,0.55*\x-.035) {A${}_2$};
\node (L3)at (3*\y-.5*\x,0.55*\x) {BB};
\node (A3) at (3*\y+.25+.05*\x,0.55*\x-.03) {A${}_3$};
\node (A4) at (3*\y+.75+.05*\x,0.55*\x-.03) {A${}_4$};
\node (L4) at (4.25*\y-.25*\x,0.55*\x) {BCC};
\node (A5) at (4.25*\y+.05+.75*\x,0.55*\x-.03) {A${}_5$};
\node (A6) at (6.25*\y+.25*\x+.05,0.55*\x-.03) {A${}_6$};
\node (L5)at (5.25*\y,0.55*\x) {C};
\node (L6) at (6.25*\y-.25*\x,0.55*\x) {B};
\node (L7) at (7.25*\y,0.55*\x) {B};
\node (L8) at (8.25*\y,0.55*\x) {CC};
\node (L9) at (9.25*\y,0.55*\x) {B};

\draw [ultra thick,blue] (A1) to [out=90, in=90] (L3);
\draw [ultra thick,blue] (A2) to [out=90, in=90] (L3);
\draw [ultra thick,blue] (A3) to [out=90, in=90] (L9);
\draw [ultra thick,blue] (A4) to [out=90, in=90] (L4);
\draw [ultra thick,blue] (A5) to [out=90, in=90] (L6);
\draw [ultra thick,blue] (A6) to [out=90, in=90] (L7);
\draw [ultra thick,red,dashed] (A1) to [out=90, in=90] (L2);
\draw [ultra thick,red,dashed] (A2) to [out=90, in=90] (L8);
\draw [ultra thick,red,dashed] (A3) to [out=90, in=90] (L4);
\draw [ultra thick,red,dashed] (A4) to [out=90, in=90] (L4);
\draw [ultra thick,red,dashed] (A5) to [out=90, in=90] (L5);
\draw [ultra thick,red,dashed] (A6) to [out=90, in=90] (L8);
\begin{scope}[shift = {(0,-2)}]
    \node at (-1.5, 0) {$L'_1$ = };
    \node (A) at (0,0) {1};
    \node (B) at (-1,1) {3};
    \node (C) at (1,1) {2};
    \draw [-] (A) -- (B);
    \draw [-] (A) -- (C);
\end{scope}
\begin{scope}[shift = {(3,-2)}]
    \node at (-1.5, 0) {$L'_2$ = };
    \node (A) at (0,0) {2};
    \node (B) at (-1,1) {3};
    \node (C) at (1,1) {8};
    \draw [-] (A) -- (B);
    \draw [-] (A) -- (C);
\end{scope}
\begin{scope}[shift = {(6,-2)}]
    \node at (-1.5, 0) {$L'_3$ = };
    \node (A) at (0,0) {3};
    \node (B) at (-1,1) {9};
    \node (C) at (1,1) {4};
    \draw [-] (A) -- (B);
    \draw [-] (A) -- (C);
\end{scope}
\begin{scope}[shift = {(9,-2)}]
    \node at (-1.5, 0) {$L'_4$ = };
    \node (A) at (0,0) {3};
    \node (B) at (-1,1) {4};
    \node (C) at (1,1) {4};
    \draw [-] (A) -- (B);
    \draw [-] (A) -- (C);
\end{scope}
\begin{scope}[shift = {(12,-2)}]
    \node at (-1.5, 0) {$L'_5$ = };
    \node (A) at (0,0) {4};
    \node (B) at (-1,1) {6};
    \node (C) at (1,1) {5};
    \draw [-] (A) -- (B);
    \draw [-] (A) -- (C);
\end{scope}
\begin{scope}[shift = {(15,-2)}]
    \node at (-1.5, 0) {$L'_6$ = };
    \node (A) at (0,0) {6};
    \node (B) at (-1,1) {7};
    \node (C) at (1,1) {8};
    \draw [-] (A) -- (B);
    \draw [-] (A) -- (C);
\end{scope}
\end{tikzpicture}
    \caption{The associated Kreweras bump diagram to $w$ and noncrossing layer decomposition of $W(w)$, where $w$ is from Figure~\ref{fig:LQPMKWofF}}
    \label{fig:GenKrewerasArcDiagram}
\end{figure}
We now state and prove our first result concerning how $\Pro$ impacts $w$.

\begin{prop}\label{prop:ContentRotationViaBumpDiagram}
If $f$ has noncrossing layer decomposition $\{L'_i\}$, then $\Pro(f)$ is the $P$-strict labeling obtained by applying $\Pro$ to each $L'_i$ and then reordering the labels within each fiber.
\end{prop}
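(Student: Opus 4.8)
The plan is to show that the noncrossing layer decomposition $\Phi\colon f\mapsto\{L'_i\}$ conjugates $\Pro$ on $\mathcal{L}_{\V\times[\ell]}(R^q)$ into the operator $\Pro^{\otimes\ell}$ that applies $\Pro$ (now on the single-layer set $\mathcal{L}_{\V\times[1]}(R^q)$) to each $L'_i$ separately. The easy half is to identify $\Phi^{-1}$ with ``reorder within fibers'': given a multiset $\{(a_i,b_i,c_i)\}$ of $\V$-strict labelings one forms the labeling whose fibers $F_A,F_B,F_C$ are the weakly increasing rearrangements of $\{a_i\}$, $\{b_i\}$, $\{c_i\}$. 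Since $a_i<b_i$ and $a_i<c_i$ for every $i$, the $r$th smallest of the $a_i$ lies strictly below the $r$th smallest of the $b_i$ and of the $c_i$, so this is genuinely a $\V$-strict labeling; and since the generalized bump diagram of the word underlying $f$, hence its two noncrossing matchings, hence $\Phi(f)$, depends only on the block multiplicities of that word, this ``reorder'' map is a two-sided inverse of $\Phi$. Thus the Proposition is equivalent to $\Phi\circ\Pro=\Pro^{\otimes\ell}\circ\Phi$; note that this in particular forces $\Pro^{\otimes\ell}$ to map the (proper) subset of multisets that actually arise as decompositions to itself, which is part of what must be proved.

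The point to be careful about is that this identity does \emph{not} hold one Bender--Knuth involution at a time: in general $\Phi(\tau_k(f))\ne\{\tilde\tau_k(L'_i)\}$, where $\tilde\tau_k$ is the corresponding involution on $\mathcal{L}_{\V\times[1]}(R^q)$. The reason is that $\tau_k$ acts through the \emph{fiber} pairing (it pairs each label with the one directly above it in its own fiber), while $\tilde\tau_k$ on $L'_i$ acts through the \emph{bump-diagram} pairing, and these differ once $\ell\ge 2$ --- indeed $\tau_k$ can leave $f$ unchanged while $\tilde\tau_k^{\otimes\ell}$ moves $\Phi(f)$, because the two motions cancel at the level of fibers but not at the level of arcs. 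So one has to analyze the full sweep $\Pro=\tau_{q-1}\circ\cdots\circ\tau_1$ as a whole. I would first pin down $\Pro$ on a single layer $(a,b,c)\in\mathcal{L}_{\V\times[1]}(R^q)$ by an explicit combinatorial rule (this is the object whose order we ultimately invoke, via Proposition~\ref{prop:corconj} and the known order of rowmotion on $\V\times[q-2]$), and then follow how the generalized bump diagram $\D_w$ of $f$ changes under $\tau_1$, then $\tau_2$, \dots, then $\tau_{q-1}$. The two features of $\V$ that make the computation go through are: $A$ is the unique minimum, so the top label of any free window in $F_A$ is always lowerable and the only obstruction to raising an $A$-label of value $k$ is that its blue or red partner sits in block $k+1$; and $B$, $C$ are incomparable maxima whose only constraint is through $A$. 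With these one shows that although $\D_w$ passes through non-noncrossing intermediate states, after the full sweep every blue and red arc has been carried to precisely the position dictated by $\Pro$ on its own layer, whence $\Phi(\Pro(f))=\{\Pro(L'_i)\}$.

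The crux, as I see it, is exactly this global tracking: controlling $\D_w$ through all $q-1$ involutions while keeping each arc tagged with the layer it belongs to, so that the silent ``reorder within fibers'' inside each $\tau_k$ never scrambles the tagging in a way visible at the end of the sweep. A reasonable device is to propagate, alongside the labeling, a ``partially promoted'' tagged decomposition: show by a local case analysis at blocks $k$ and $k+1$ that $\tau_k$ transports it correctly even when it is not the honest noncrossing decomposition of the intermediate labeling, and check it agrees with $\Phi$ at the start and with $\{\Pro(L'_i)\}$ at the end. Once the ``bulk'' rule is established, the boundary bookkeeping --- labels pinned near $1$ or near $q$, and the edge involutions $\tau_1$ and $\tau_{q-1}$ --- follows directly from the shape of $R^q$.
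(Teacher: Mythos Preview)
Your framing is correct: $\Phi^{-1}$ is indeed ``sort within each fiber'', and you are right that the conjugation identity $\Phi\circ\Pro=\Pro^{\otimes\ell}\circ\Phi$ cannot be checked one $\tau_k$ at a time. But the proposal stops precisely where the work begins. You say one should ``show by a local case analysis at blocks $k$ and $k+1$ that $\tau_k$ transports [the tagged decomposition] correctly'', yet you never specify what the partially promoted tagged decomposition is at an intermediate stage, so there is no invariant to verify. The sentence ``with these one shows that \ldots\ every blue and red arc has been carried to precisely the position dictated by $\Pro$ on its own layer'' is the conclusion of the proposition, not an argument for it.

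What the paper supplies, and what your outline is missing, is the mechanism linking the \emph{fiber} structure (which governs whether a label is free under $\tau_k$) to the \emph{noncrossing-matching} structure (which defines the $L'_i$). Concretely: after applying $\tau_r\cdots\tau_1$, the only $A$-labels equal to $r$ that are raisable are those that sat in block $1$ of $w$, since every other $A$ was lowerable when first reached and has already dropped. For a $B$ (or $C$) of current value $r+1$, lowerability is established by a counting argument: if it is the $j$th $B$, its matched $A$ in $\D_w$ had original value at most $r$ and has since been decremented, so at least $j$ many $A$'s have value at most $r-1$, forcing the $j$th $A$ in the fiber $F_A$ strictly below and making this $B$ free. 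The moment an $A_i$ from block $1$ stops being raisable is exactly $r+1=\min(b_i,c_i)$, and the reason is again a count: the noncrossing condition guarantees there are exactly $i-1$ $A$'s in blocks $2,\ldots,r$ of the intermediate word, so the $i$th $A$ in $F_A$ and the $i$th $B$ (or $C$) in $F_B$ (or $F_C$) collide at values $r$ and $r+1$. Your parenthetical that ``the only obstruction to raising an $A$-label of value $k$ is that its blue or red partner sits in block $k+1$'' is the right heuristic, but in the multi-layer setting it is a theorem, not an axiom: the obstruction is phrased in terms of fibers, and identifying it with the bump-diagram partner is exactly the counting step you have not supplied.
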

\begin{proof}

Let $w = W^{-1}(f)$. If there are no $A$'s labeled 1, then $\Pro$ will reduce each label by 1. This is the same as applying $\Pro$ to each $L'_i$. So we will assume that there is some $A$ with label 1. One thing to note is that in $\tau_{r}\dots\tau_1 f$ when considering the application of $\tau_{r+1}$, the only $A$ labels that can be raised corresponded to $A$'s that before applying any toggles had the label of 1. This is as all other $A$'s corresponded to lowerable labels.

 Let $L'_i(A)= 1$, $L'_i(B)=b_i, L'_i(C)=c_i$, where without loss of generality $b_i\le c_i$, and suppose that $L'_{i+1}(A) > 1$, i.e. this $A$ is largest $A$ in the linear order of $w_1$.  
  Denote this $A$ of $L'_i$ by $A_i$ and suppose that the $B$ of $L'_i$ is the $B$ of the $k$th layer of $f$, where $k$ is minimal among the $B$'s of $w_{b_i}$ that are matched to an $A$ in $w_1$. Under $\tau_{b_i-2}\dots \tau_2\tau_1$ the ${b_i-1}$st block is exactly $i$ $A$'s. This is as every other $A$ label encountered during the applications of $\tau_{b_i-2}\dots \tau_2\tau_1$ was lowerable. Additionally every label corresponding to a $B$ or $C$ that was encountered was lowerable. This claim about the labels of the $B$'s and $C$'s encountered up to this point always being lowerable holds by the following argument. 
  
  The labels that correspond to $B$'s or $C$'s were matched, via the noncrossing matchings, to an $A$ that at the time of checking if the label of the $j$th $B$ or $C$ is lowereable has label at least 2 less than the label of the $j$th $B$ or $C$. This $A$ initially had label at least 1 less than the label of the $j$th $B$ or $C$ but was then lowered. As such there are at least $j$ $A's$ with labels at least 2 less than the label of the $j$th $B$ or $C$. Consequently the label of the $j$th $B$ or $C$ is lowerable. 
  
  Through the application of $\tau_{b_i-2}\dots \tau_2\tau_1$ to $f$ no labels have been fixed, so there are $i$ $A's$ in $w_{b_i-1}$.  Importantly there are exactly $k-1$ $A$'s through the first $b_i-2$ blocks of $\tau_{b_i-2}\dots \tau_2\tau_1(w)$.

  If there were any fewer, then $k$ would not be minimal, and if there were any more, then the $k$th $B$ would not be matched to $A_i$ in the noncrossing matching.
  
Now consider what the application of $\tau_{b_{i}-1}$ will do to $\tau_{b_i-2}\dots \tau_2\tau_1 f$. For convenience let $w' = W^{-1}(\tau_{b_i-2}\dots \tau_2\tau_1 f)$. All the labels of all the $B$'s and $C's$ that were the $j$th $B$ or $C$, for $j < k$, in $f$ correspond to lowerable labels for the same reasons that all previously encountered $B$ and $C$ labels were lowerable. Notice that the $k$th $B$ is in block $w'_{b_i}$ and the $k$th $A$ is in block $w'_{b_i-1}$. So this $A$ will not be a raisable label and this $B$ will not be a lowerable label. If $L'_i(C) = L'_i(B)$ then the label of the associated $C$ will also not be lowerable. 

Note that this argument holds for any $A$ in $w'_{b_i-1}$ with associated noncrossing layer $L'_{i'}$ in $f$ that satisfies $\min(L'_{i'}(B), L'_{i'}(C))=b_i$  Following the same logic, the only $B's$ or $C's$ in $w'_{b_i}$ that were lowerable under the application of $\tau_{b_{i}-1}$ are those that were not matched via the noncrossing matchings to $A$'s that were initially in the first block of $w$.

While then continuing to apply the $\tau$'s, we see that the only time a label corresponding to an $A$ is not raisable when applying $\tau_t$ to $\tau_{t-1}\dots \tau_2\tau_1 f$ is when in the corresponding noncrossing layer $L'_r$ decomposition of $f$ that the $A$ corresponding to the label which is not raisable was matched to a $B$ or $C$ that was in block $w_{t+1}$ and $\min(L'_r(B), L'_r(C))=t+1$. As such all labels that were not associated to a noncrossing layer $L'_s$, with $s \le i$, have just been reduced by 1. 

Since the $A$ to which the $B$ or $C$ was originally matched to in $\mathcal{D}_w$ has corresponding label at least 2 less when applying the first toggle which can change the label, then so were all labels that were associated to a noncrossing layer that were strictly larger than $\min(L'_s(B), L'_s(C)$, by the same logic that was used to show that all the $B$'s and $C's$ that were before the $k$th $B$ corresponded to lowerable labels.  

It also follows immediately that if a label corresponding to a $B$ or $C$ was not lowerable, then in all subsequent toggles the associated label will be raisable. Consequently there will be a $B$ or $C$ in the last block of $W^{-1}\Pro(f)$ for each label of a $B$ or $C$ that was not lowerable.  

In addition note that for any $g\in \mathcal{L}_\V(R^q)$, which is just an increasing labeling of $\V$ with entries in $[q]$, $\Pro(g)$ just reduces each label by $1$ if $g(A) >1$. If $g(A)=1$, there are two cases. When $g(B)=g(C)$, $\Pro(g)$ is obtained by first increasing the label of $A$ to be 1 less than $g(B)$, and then increasing the labels of $B$ and $C$ to $q$. 
Otherwise, without loss of generality assuming that $g(B) < g(C)$, then $\Pro(g)$ is obtained by first increasing the label of $A$ to be 1 less than $g(B)$, then decreasing $g(C)$ by 1, and setting $g(B)=q$. 
Putting this all together, observe that the labels of $\Pro(f)$ were changed exactly as if $\Pro$ had been applied to each labeling in the noncrossing layer decomposition.

\end{proof}

\begin{figure}[htb]
    \centering
\begin{tikzpicture}[scale=0.85]
\def \x {0.7};
\def \y {1.6};
\node at (-1*\x,0) {$\D_w=$};
\node (1) at (1*\y,0) {1};
\node (2) at (2*\y,0) {2};
\node (3) at (3*\y,0) {3};
\node (4) at (4.25*\y,0) {4};
\node (5) at (5.24*\y,0) {5};
\node (6) at (6.25*\y,0) {6};
\node (7) at (7.25*\y,0) {7};
\node (8) at (8.25*\y,0) {8};
\node (9) at (9.25*\y,0) {9};
\node (A1) at (1*\y,0.55*\x) {A${}_1$};
\node (L2)at (2*\y-.25*\x,0.55*\x) {C};
\node (A2) at (2*\y+.05+.25*\x,0.55*\x-.035) {A${}_2$};
\node (L3)at (3*\y-.5*\x,0.55*\x) {BB};
\node (A3) at (3*\y+.25+.05*\x,0.55*\x-.03) {A${}_3$};
\node (A4) at (3*\y+.75+.05*\x,0.55*\x-.03) {A${}_4$};
\node (L4) at (4.25*\y-.25*\x,0.55*\x) {BCC};
\node (A5) at (4.25*\y+.05+.75*\x,0.55*\x-.03) {A${}_5$};
\node (A6) at (6.25*\y+.25*\x+.05,0.55*\x-.03) {A${}_6$};
\node (L5)at (5.25*\y,0.55*\x) {C};
\node (L6) at (6.25*\y-.25*\x,0.55*\x) {B};
\node (L7) at (7.25*\y,0.55*\x) {B};
\node (L8) at (8.25*\y,0.55*\x) {CC};
\node (L9) at (9.25*\y,0.55*\x) {B};

\draw [ultra thick,blue] (A1) to [out=90, in=90] (L3);
\draw [ultra thick,blue] (A2) to [out=90, in=90] (L3);
\draw [ultra thick,blue] (A3) to [out=90, in=90] (L9);
\draw [ultra thick,blue] (A4) to [out=90, in=90] (L4);
\draw [ultra thick,blue] (A5) to [out=90, in=90] (L6);
\draw [ultra thick,blue] (A6) to [out=90, in=90] (L7);
\draw [ultra thick,red,dashed] (A1) to [out=90, in=90] (L2);
\draw [ultra thick,red,dashed] (A2) to [out=90, in=90] (L8);
\draw [ultra thick,red,dashed] (A3) to [out=90, in=90] (L4);
\draw [ultra thick,red,dashed] (A4) to [out=90, in=90] (L4);
\draw [ultra thick,red,dashed] (A5) to [out=90, in=90] (L5);
\draw [ultra thick,red,dashed] (A6) to [out=90, in=90] (L8);
\begin{scope}[shift = {(0,-2)}]
    \node at (-1.5, 0) {$L'_1$ = };
    \node (A) at (0,0) {1};
    \node (B) at (-1,1) {3};
    \node (C) at (1,1) {2};
    \draw [-] (A) -- (B);
    \draw [-] (A) -- (C);
\end{scope}
\begin{scope}[shift = {(3,-2)}]
    \node at (-1.5, 0) {$L'_2$ = };
    \node (A) at (0,0) {2};
    \node (B) at (-1,1) {3};
    \node (C) at (1,1) {8};
    \draw [-] (A) -- (B);
    \draw [-] (A) -- (C);
\end{scope}
\begin{scope}[shift = {(6,-2)}]
    \node at (-1.5, 0) {$L'_3$ = };
    \node (A) at (0,0) {3};
    \node (B) at (-1,1) {9};
    \node (C) at (1,1) {4};
    \draw [-] (A) -- (B);
    \draw [-] (A) -- (C);
\end{scope}
\begin{scope}[shift = {(9,-2)}]
    \node at (-1.5, 0) {$L'_4$ = };
    \node (A) at (0,0) {3};
    \node (B) at (-1,1) {4};
    \node (C) at (1,1) {4};
    \draw [-] (A) -- (B);
    \draw [-] (A) -- (C);
\end{scope}
\begin{scope}[shift = {(12,-2)}]
    \node at (-1.5, 0) {$L'_5$ = };
    \node (A) at (0,0) {4};
    \node (B) at (-1,1) {6};
    \node (C) at (1,1) {5};
    \draw [-] (A) -- (B);
    \draw [-] (A) -- (C);
\end{scope}
\begin{scope}[shift = {(15,-2)}]
    \node at (-1.5, 0) {$L'_6$ = };
    \node (A) at (0,0) {6};
    \node (B) at (-1,1) {7};
    \node (C) at (1,1) {8};
    \draw [-] (A) -- (B);
    \draw [-] (A) -- (C);
\end{scope}
\end{tikzpicture}

\vspace{.2cm}
\begin{tikzpicture}[scale=0.85]
\def \x {0.7};
\def \y {1.6};
\node at (-1*\x,0) {$\D_{\Pro(w)}=$};
\node (1) at (.9*\y,0) {1};
\node (2) at (2*\y,0) {2};
\node (3) at (3.25*\y,0) {3};
\node (4) at (4.25*\y,0) {4};
\node (5) at (5.24*\y,0) {5};
\node (6) at (6.25*\y,0) {6};
\node (7) at (7.25*\y,0) {7};
\node (8) at (8.25*\y,0) {8};
\node (9) at (9.25*\y,0) {9};
\node (A1) at (.8*\y,0.55*\x) {A${}_1$};
\node (L2)at (2*\y-.55*\x,0.55*\x) {BB};
\node (A2) at (.8*\y+.5+.05*\x,0.55*\x) {A${}_2$};
\node (L3)at (3*\y+.2*\x,0.55*\x) {BCC};
\node (A3) at (2*\y+.25+.05*\x,0.55*\x-.03) {A${}_3$};
\node (A4) at (2*\y+.75+.05*\x,0.55*\x-.03) {A${}_4$};
\node (L4) at (4.25*\y,0.55*\x) {C};
\node (A5) at (3.25*\y+.7*\x,0.55*\x-.03) {A${}_5$};
\node (A6) at (5.1*\y+.65*\x+.05,0.55*\x-.03) {A${}_6$};
\node (L5)at (5.1*\y,0.55*\x) {B};
\node (L6) at (6.25*\y,0.55*\x) {B};
\node (L7) at (7.25*\y,0.55*\x) {CC};
\node (L8) at (8.25*\y,0.55*\x) {B};
\node (L9) at (9.25*\y,0.55*\x) {C};

\draw [ultra thick,blue] (A1) to [out=90, in=90] (L2);
\draw [ultra thick,blue] (A2) to [out=90, in=90] (L2);
\draw [ultra thick,blue] (A3) to [out=90, in=90] (L8);
\draw [ultra thick,blue] (A4) to [out=90, in=90] (L3);
\draw [ultra thick,blue] (A5) to [out=90, in=90] (L5);
\draw [ultra thick,blue] (A6) to [out=90, in=90] (L6);
\draw [ultra thick,red,dashed] (A1) to [out=90, in=90] (L9);
\draw [ultra thick,red,dashed] (A2) to [out=90, in=90] (L7);
\draw [ultra thick,red,dashed] (A3) to [out=90, in=90] (L3);
\draw [ultra thick,red,dashed] (A4) to [out=90, in=90] (L3);
\draw [ultra thick,red,dashed] (A5) to [out=90, in=90] (L4);
\draw [ultra thick,red,dashed] (A6) to [out=90, in=90] (L7);
\begin{scope}[shift = {(0,5.1)}]
    \node at (-1.5, 0) {$\Pro(L'_1)$ = };
    \node (A) at (0,0) {1};
    \node (B) at (-1,1) {2};
    \node (C) at (1,1) {9};
    \draw [-] (A) -- (B);
    \draw [-] (A) -- (C);
\end{scope}
\begin{scope}[shift = {(3,5.1)}]
    \node at (-1.5, 0) {$\Pro(L'_2)$ = };
    \node (A) at (0,0) {1};
    \node (B) at (-1,1) {2};
    \node (C) at (1,1) {7};
    \draw [-] (A) -- (B);
    \draw [-] (A) -- (C);
\end{scope}
\begin{scope}[shift = {(6,5.1)}]
    \node at (-1.5, 0) {$\Pro(L'_3)$ = };
    \node (A) at (0,0) {2};
    \node (B) at (-1,1) {8};
    \node (C) at (1,1) {3};
    \draw [-] (A) -- (B);
    \draw [-] (A) -- (C);
\end{scope}
\begin{scope}[shift = {(9,5.1)}]
    \node at (-1.5, 0) {$\Pro(L'_4)$ = };
    \node (A) at (0,0) {2};
    \node (B) at (-1,1) {3};
    \node (C) at (1,1) {3};
    \draw [-] (A) -- (B);
    \draw [-] (A) -- (C);
\end{scope}
\begin{scope}[shift = {(12,5.1)}]
    \node at (-1.5, 0) {$\Pro(L'_5)$ = };
    \node (A) at (0,0) {3};
    \node (B) at (-1,1) {5};
    \node (C) at (1,1) {4};
    \draw [-] (A) -- (B);
    \draw [-] (A) -- (C);
\end{scope}
\begin{scope}[shift = {(15,5.1)}]
    \node at (-1.5, 0) {$\Pro(L'_6)$ = };
    \node (A) at (0,0) {5};
    \node (B) at (-1,1) {6};
    \node (C) at (1,1) {7};
    \draw [-] (A) -- (B);
    \draw [-] (A) -- (C);
\end{scope}
\end{tikzpicture}
    \caption{The associated generalized Kreweras bump diagram to $w$ and noncrossing layer decomposition of $W(w)$, where $w$ is from Figure~\ref{fig:LQPMKWofF}}
    \label{fig:GenKrewerasArcDiagramUnderPro}
\end{figure}

\subsection{Double Arcs}
Next, we try to understand the behavior of arcs under $\Pro$. We will fully describe the behavior of double arcs to reduce to the case where there are no double arcs. 
\begin{lemma}\label{lem:doubleArcRot}
For each $L'_i$ in the noncrossing layer decomposition of $W(w)$ that corresponds to a double arc between $(k,j)$ in $\mathcal{D}_w$ there is a noncrossing layer in the noncrossing layer decomposition of $\Pro(W(w))$ which is $\Pro(L'_i)$, i.e. a double arc between $(k-1,j-1)$ if $k > 1$ and otherwise a double arc between $(j-1,q)$ in $\mathcal{D}_{\Pro(w)}$.
\end{lemma}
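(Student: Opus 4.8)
The plan is to deduce the lemma from Proposition~\ref{prop:ContentRotationViaBumpDiagram} together with the explicit description of $\Pro$ on $\mathcal{L}_{\V}(R^q)$ recorded in its proof. Write $f=W(w)$, and fix a double-arc layer $L'_i$ with $L'_i(A)=k$ and $L'_i(B)=L'_i(C)=j$, so $k<j$. If $k>1$, the cited description says $\Pro$ subtracts $1$ from all three labels of $L'_i$, so $\Pro(L'_i)$ has $A$-value $k-1$ and $B$-, $C$-value $j-1$; if $k=1$, then because $L'_i(B)=L'_i(C)$ we are in the case where $\Pro$ first raises $A$ to $j-1$ and then raises $B$ and $C$ to $q$, so $\Pro(L'_i)$ has $A$-value $j-1$ and $B$-, $C$-value $q$. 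In both cases $\Pro(L'_i)$ is again ``double-arc shaped'' (its $B$- and $C$-values coincide) and is precisely the triple named in the statement. By Proposition~\ref{prop:ContentRotationViaBumpDiagram} the fiber contents of $\Pro(f)$ equal $\bigsqcup_{i}\Pro(L'_i)$, so the values of $\Pro(L'_i)$ certainly appear in $\Pro(f)$; the real content of the lemma is that for each double-arc $L'_i$ this triple is realized as one noncrossing layer of $\mathcal{D}_{\Pro(w)}$, i.e.\ that the $A$ now placed in block $k-1$ (resp.\ $j-1$) is matched, in both $\M^{B}_{\Pro(w)}$ and $\M^{C}_{\Pro(w)}$, to block $j-1$ (resp.\ $q$).

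To pin this down I would first record a reading-order characterization of double arcs in an arbitrary generalized bump diagram $\mathcal{D}_v$: an $A$ in block $k$ opens a double arc to block $j$ exactly when, reading $v$ from left to right, the $B$-surplus (number of $A$'s read so far minus number of $B$'s read so far) first drops below its value at that $A$ within block $j$, and likewise for the $C$-surplus. In particular the double arcs of $\mathcal{D}_v$ are pairwise noncrossing and ``innermost'', and one can peel a double-arc layer off: delete its $A$ from block $k$ and one $B$ and one $C$ from block $j$ to obtain a sequence $v^{-}$ whose noncrossing layer decomposition is exactly that of $v$ with the peeled layer removed. Here one checks that $v^{-}$ is still a legal $(\ell-1,q)$-partial multi Kreweras word: deleting the $A$ of a double arc only weakens the prefix conditions on blocks $k+1,\dots,j-1$, and on those blocks the prefix inequalities were already strict, precisely because that $A$ sits unmatched on the $B$- and $C$-stacks throughout that range.

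The lemma then follows by induction on $\ell$. Given $w$ with a double-arc layer $L'_i$, peel it off as above to get $w^{-}$, and apply Proposition~\ref{prop:ContentRotationViaBumpDiagram} to both $w$ and $w^{-}$; comparing fiber contents shows that $\Pro(w)$ is obtained from $\Pro(w^{-})$ by re-inserting the single layer $\Pro(L'_i)$. The inductive hypothesis applied to $w^{-}$ handles all other double-arc layers: each such $L'_{i'}$ is a double-arc layer of $\mathcal{D}_{w^{-}}$, hence $\Pro(L'_{i'})$ is a noncrossing layer of $\mathcal{D}_{\Pro(w^{-})}$. Thus everything reduces to showing that re-inserting an $A$ in block $k-1$ (resp.\ $j-1$) together with a $B$ and a $C$ in block $j-1$ (resp.\ $q$) into the noncrossing diagram $\mathcal{D}_{\Pro(w^{-})}$ adds exactly one new double arc, between the predicted pair of blocks, and disturbs no other arc; this in turn is checked against the surplus characterization of the previous paragraph, using the content-level description of $\Pro(w^{-})$ and $\Pro(w)$ from Proposition~\ref{prop:ContentRotationViaBumpDiagram}.

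The main obstacle is exactly this last re-insertion claim: one must verify that the re-inserted $A$ lands in a position that is extremal enough with respect to the $B$- and $C$-surpluses of $\Pro(w^{-})$ to force a double arc at the predicted blocks rather than being re-paired by the noncrossing matchings of $\Pro(w)$. This is where the interaction between the content-level action of $\Pro$ and the surplus bookkeeping must be made fully precise, and where the wrap-around case $k=1$ — in which the $B$ and $C$ of the layer jump all the way to the last block $q$ — is the most delicate.
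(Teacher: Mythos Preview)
Your approach is different from the paper's and, as you yourself acknowledge, incomplete at the decisive step. The paper argues directly in $\mathcal{D}_{\Pro(w)}$: for $k>1$ it first notes that no $A_s$ with $s\le i$ can have an arc into blocks $[k+1,j-1]$ of $w$ (any such arc would cross the double arc), and then observes that the only way the shifted double arc $(k-1,j-1)$ could fail to appear in $\Pro(w)$ is if some $A$ originally in block $1$ of $w$ lands, under $\Pro$, in a block $s\in[k,j-2]$. By Proposition~\ref{prop:ContentRotationViaBumpDiagram} such an $A$ had its shortest arc in $w$ ending in block $s+1\in[k+1,j-1]$, contradicting the first observation. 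The $k=1$ case is handled by a separate bookkeeping argument tracking where all the block-$1$ $A$'s and their partners end up.

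Your peeling/re-insertion framework is a reasonable alternative and, if completed, would essentially yield Proposition~\ref{prop:IgnoreDoubleArcs} in the same stroke. But the re-insertion claim you isolate \emph{is} the entire content of the lemma, and you do not prove it. Concretely: after inserting the new $A$ at, say, the end of block $k-1$ of $\Pro(w^{-})$, it sits on top of the $B$-stack and will be popped by the very first $B$ it meets in a block $\ge k$. There certainly can be $B$'s in blocks $[k,j-2]$ of $\Pro(w^{-})$---they are the shifted interior $B$'s of the original double arc---so it is not automatic that the new $A$ survives to block $j-1$. What makes this work is that those interior $B$'s are balanced, block by block, by interior $A$'s which under $\Pro$ also shift down and sit above the new $A$ on the stack; verifying this balance at every intermediate block is precisely the surplus computation you defer, and carrying it out requires ruling out interference from block-$1$ $A$'s of $w$, i.e.\ the same crossing argument the paper makes directly. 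Your induction on $\ell$ does not help here: the inductive hypothesis only tells you about double arcs of $\Pro(w^{-})$, not about the absence of arcs straddling the interval $[k-1,j-1]$. In short, the proposal identifies a clean reduction but stops exactly where the proof has to begin.
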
 
\begin{proof}
     First observe that no $A_s$ for $s\le i$ can have an arc to a $B$ or $C$ in any of the blocks $w_t$ for $t\in [k+1,j-1]$. Such an arc would be part of a crossing in one of the two matchings. There are then two cases to consider, either $k>1$ or $k=1$. 
     
     If $k > 1$ and in $\mathcal{D}_{\Pro(w)}$ there is not a corresponding double arc from $(k-1,j-1)$, then one of the $B$ or $C$ from $L'_i$ is matched to an $A$ in $\mathcal{D}_{\Pro(w)}$ that was in block 1 of $w$. This is as for there not to be a double arc, there would need to be an $A$ in block $s\in [k,j-2]$ in $\Pro(w)$ that was not in block $s+1$ in $w$. But this cannot occur, as it would imply this $A$ in $\mathcal{D}_w$ was matched to a $B$ or $C$ that was in block $s$, which would imply a crossing in the corresponding matching. 

If $k=1$, then for every $i' < i$, $A_{i'}$ is matched to a $B$ and $C$ in respective blocks indexed by $b_{i'},c_{i'}\ge j$ with $\min{b_{i'},c_{i'}}=j'$ due to the noncrossing property of the matchings. When considering $\Pro(w)$ there will be an $A$ in block $j'-1\ge j-1$, by Proposition~\ref{prop:ContentRotationViaBumpDiagram}, for each $i' < i$, and a $B$ and a $C$ in blocks with indices at least $j'$ which are matched to the $A$ corresponding to $A_{i'}$ in $w$. This means that the $A$ that was $A_i$ in $w$ now corresponds to an $A$ in block $j-1$ which must match to a $B$ and $C$ in block $q$. Every $B$ and $C$ in $w$ that was matched to an $A_{i'}$ with $i' < i$, or an $A$ in a block $t,t\ge j$, corresponds to a $B$ or $C$ in a block $s\ge j$ by Proposition~\ref{prop:ContentRotationViaBumpDiagram}.  In addition the associated $A$ must be in a block indexed by $s'\ge j-1$. After rearranging $A's$ within a block we can assume that all of the associated $A's$ in $\Pro(w)$ follow the $A$ that was $A_i$, so they match to all the $B$'s and the $C'$s they collectively were associated to in $w$. As such the $A$ that was $A_i$ in $w$ must match to a $B$ and $C$ in $\Pro(w)_q$, as there are no other $B$'s and $C's$ to match to. So there is a double arc of the form $(j-1,q)$ in $\Pro(w)$ that corresponded to the double arc of the form $(1,j)$ in $w$. 

Note that if there are multiple double arcs of the form $(k,j)$ in $\mathcal{D}_w$, $\Pro$ acts identically on all of them. This is because they are interchangeable at the level of the word. As such, they each correspond to a double arc of the form $(k-1,j-1)$ if $k> 1$ or $(j-1,q)$ if $k=1$. 
\end{proof}

We now show that the number of double arcs is preserved under $\Pro$. The proof provided is more involved than necessary, but provides more understanding of the structure of $P$-strict promotion. Additionally some of the machinery will be essential later. A shorter proof is the following. By Lemma~\ref{lem:doubleArcRot} the number of double arcs of $\Pro(w)$ is at least the number of double arcs of $w$. Since $\Pro$ has finite order, the number of double arcs can never strictly increase.  So the number of double arcs must be constant over an orbit of $\Pro$.

In $\D_w$ with associated $P$-strict labeling $f$, for $A_i$ associated to $L'_i$, if $L'_i(B) \le L'_i(C)$ (or $L'_i(C) \le L'_i(B)$), we say that the arc $(a_i,b_i)$ (or $(a_i,c_i)$) is the \emph{shortest arc associated to $A_i$}.

\begin{lemma}\label{lem:ShortestArcsPreserved}
     If $f(A_i)= a_i > 1$ and the shortest arc in $\D_w$ of $A_i$ is $(a_i, b_i)$ (or $(a_i,c_i)$), then in $\D_{\Pro(w)}$ there is an $A$ in block $a_i-1$ with shortest arc to a $B$ (or $C$) in block $b_i-1$ ($c_i-1$).
\end{lemma}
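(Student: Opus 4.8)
The plan is to deduce the statement from Proposition~\ref{prop:ContentRotationViaBumpDiagram} together with a direct analysis of the noncrossing matchings underlying $\D_{\Pro(w)}$, in the spirit of the proof of Lemma~\ref{lem:doubleArcRot}. Using the automorphism of $\V$ that exchanges $B$ and $C$ we may assume the shortest arc of $A_i$ is the blue one, i.e.\ that $b_i := L'_i(B) \le L'_i(C) =: c_i$; the task is then to exhibit in $\D_{\Pro(w)}$ an $A$ in block $a_i-1$ whose blue arc ends in block $b_i-1$ and whose red arc ends in a block that is at least $b_i-1$. Since $f(A_i)=a_i>1$, the analysis of $\Pro$ on $\V$-strict labelings carried out in the proof of Proposition~\ref{prop:ContentRotationViaBumpDiagram} shows that $\Pro$ acts on $L'_i$ by subtracting $1$ from each label, so $\Pro(L'_i)=(a_i-1,\,b_i-1,\,c_i-1)$ with $1\le a_i-1<b_i-1\le c_i-1$. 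Thus the fibers of $\Pro(f)$ contain the values $a_i-1,\ b_i-1,\ c_i-1$ contributed by this layer, and all that remains is to show that in the unique noncrossing diagram $\D_{\Pro(w)}$ some $A$ in block $a_i-1$ has its two arcs in the asserted positions.

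I would carry this out using the left-to-right stack rule computing the noncrossing blue and red matchings, drawing on two facts. First, by Proposition~\ref{prop:ContentRotationViaBumpDiagram} an $A$ lying in a block $\ge 2$ in $w$, together with the $B$ and $C$ it is matched to, drops in $\Pro(w)$ by exactly $1$; and an $A$ lying in block $1$ in $w$ lands in $\Pro(w)$ in the block $\min(L'(B),L'(C))-1$ of its noncrossing layer $L'$, which by the noncrossing property of the blue and red matchings of $\D_w$ cannot fall strictly between blocks $a_i-1$ and $b_i-1$, since $A_i$'s blue arc ends at $b_i$ and its red arc ends at $c_i\ge b_i$. Consequently the points that land strictly between blocks $a_i-1$ and $b_i-1$ in $\Pro(w)$ are exactly the shifted copies of the $A$'s, $B$'s, $C$'s lying strictly between blocks $a_i$ and $b_i$ in $\D_w$, so that portion of the blue word is again balanced; hence the $A$ descending from $A_i$ is popped by a $B$ in block $b_i-1$. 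Second, the analogous bookkeeping for the red matching between blocks $a_i-1$ and $c_i-1$ — where block-$1$ $A$'s of $w$ may now land strictly inside, but (again by noncrossing in $\D_w$) only when matched to a $C$ in a block $\ge c_i-1$ — shows that the red arc of this $A$ ends in a block $\ge c_i-1\ge b_i-1$. Therefore the blue arc $(a_i-1,b_i-1)$ is its shortest arc, which is what we want; the $C$-case is symmetric.

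The step I expect to be the main obstacle is this bookkeeping in the presence of block-interior multiplicities: namely, showing that the blue arc of the $A$ descending from $A_i$ is not diverted to a $B$ strictly left of block $b_i-1$ and that its red arc does not overtake it. This amounts to a careful prefix-count (Dyck-path) argument on the closers lying between blocks $a_i-1$ and $b_i-1$, respectively $a_i-1$ and $c_i-1$, of $\Pro(w)$, built on the noncrossing structure of $\D_w$; the limiting case $b_i=c_i$ of this argument is exactly Lemma~\ref{lem:doubleArcRot}.
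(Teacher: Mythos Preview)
Your proposal is correct and follows essentially the same approach as the paper: both arguments hinge on the observation that no $A$ from block $1$ of $w$ can land strictly between blocks $a_i-1$ and $b_i-1$ in $\Pro(w)$, since its shortest arc in $\D_w$ would then terminate strictly between blocks $a_i$ and $b_i$ and force a crossing with one of $A_i$'s arcs. The paper packages this as a brief contradiction argument (an intervening $A$ must have come from block $1$, which is impossible), whereas you phrase it as a direct prefix-count/balanced-stack argument and are more explicit about checking that the red arc does not overtake the blue one; the underlying content is the same.
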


\begin{proof}
    Without loss of generality, assume $b_i \le c_i$. If the shortest arc associated to $A_i$ does not just shift down by 1 block in each coordinate under $\Pro$, then it must be of the form $(a_i-1, j)$ with $j\ge b_i$ as otherwise $(a_i,b_i)$ wouldn't have been the shortest arc associated to $A_i$. 
    So the $B$ that was originally matched to $A_i$ shifted down 1 block by Proposition~\ref{prop:ContentRotationViaBumpDiagram}. In $\Pro(w)$ there must be an $A$ that follows the $A$ that was $A_i$ in $w$ that did not do so in $w$.  
    This $A$ must have been an $A$ in block 1 whose shortest arc was to a $B$ or $C$ which followed $A_i$ but preceded the $B$ of the shortest arc of $A_i$. But this can't happen, as it would imply that one of the matchings has a crossing. 
\end{proof}
\begin{lemma}\label{lem:NoNewDoubleArcs}
    For a word $w$ associated to a $P$-strict labeling $f$, the number of double arcs of $\D_{\Pro(w)}$ equals the number of double arcs of $\D_w$.
\end{lemma}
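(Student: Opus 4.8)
The plan is to prove the two inequalities separately and then glue them by a counting argument. Lemma~\ref{lem:doubleArcRot} already gives an injection from the double arcs of $\D_w$ into the double arcs of $\D_{\Pro(w)}$: a double arc $(k,j)$ is sent to $(k-1,j-1)$ if $k>1$ and to $(j-1,q)$ if $k=1$, and distinct double arcs, counted with multiplicity, go to distinct ones. So it suffices to produce a matching injection in the opposite direction. Since $\D_w$ and $\D_{\Pro(w)}$ each decompose into exactly $\ell$ noncrossing layers, this is equivalent to producing an injection from the \emph{non}-double layers of $\D_w$ into the non-double layers of $\D_{\Pro(w)}$; having both injections together with the cardinality $\ell$ on each side forces each to be a bijection, which yields the claimed equality.

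To build the injection on non-double layers, fix a non-double layer $L'_i$ of $\D_w$, say with $b_i<c_i$, so that it has a genuine shortest arc $(a_i,b_i)$. If $a_i>1$, Lemma~\ref{lem:ShortestArcsPreserved} supplies an $A$ in block $a_i-1$ of $\D_{\Pro(w)}$ whose shortest arc terminates in block $b_i-1$; by Proposition~\ref{prop:ContentRotationViaBumpDiagram} the $\Pro$-image of $L'_i$ is $(a_i-1,b_i-1,c_i-1)$, which is non-double because $b_i-1\neq c_i-1$. I will then argue that the layer of $\D_{\Pro(w)}$ produced by Lemma~\ref{lem:ShortestArcsPreserved} is itself non-double: if it were a double arc, its other arc would also terminate in block $b_i-1$, and tracing that arc back one block at a time, exactly as in the proof of Lemma~\ref{lem:ShortestArcsPreserved}, would force a crossing in one of the two noncrossing matchings of $\D_w$. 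If instead $a_i=1$, then $\Pro$ sends $L'_i=(1,b_i,c_i)$ to $(b_i-1,q,c_i-1)$, still non-double since $c_i-1<q$; here the relevant layer of $\D_{\Pro(w)}$ is produced by the wrap-around analysis already carried out in the $k=1$ case of the proof of Lemma~\ref{lem:doubleArcRot}, which shows that the $B$ (or $C$) attaining the minimum moves to block $q$ while the other arc shifts down by one block.

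Having designated, for each non-double layer of $\D_w$, a non-double layer of $\D_{\Pro(w)}$, it remains to check this assignment is injective. I would do this by recording for each layer not merely the endpoints of its shortest arc but also the position of its $A$ in the linear order within its block and the endpoint of its longer arc; Proposition~\ref{prop:ContentRotationViaBumpDiagram} and Lemma~\ref{lem:ShortestArcsPreserved} track all of this data faithfully, so distinct non-double layers of $\D_w$ cannot be sent to the same layer of $\D_{\Pro(w)}$. Combining this injection with the one from Lemma~\ref{lem:doubleArcRot} and the fact that both diagrams have $\ell$ layers then completes the proof.

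I expect the main obstacle to be the injectivity bookkeeping, together with the claim in the $a_i>1$ case that the layer identified by Lemma~\ref{lem:ShortestArcsPreserved} is genuinely non-double: both require carefully relating the noncrossing matchings of $\D_w$ and $\D_{\Pro(w)}$ in the presence of repeated labels and degenerate crossings, which is precisely the structural information this lemma is meant to package for later use. (As the surrounding text observes, there is a one-line alternative—Lemma~\ref{lem:doubleArcRot} makes the double-arc count nondecreasing under $\Pro$, and $\Pro$ has finite order, so the count is constant on each orbit—but the argument above is the one that produces the reusable layer-by-layer correspondence.)
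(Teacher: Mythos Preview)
Your strategy---inject non-double layers of $\D_w$ into non-double layers of $\D_{\Pro(w)}$ and combine with Lemma~\ref{lem:doubleArcRot} and the fixed total $\ell$---is a legitimate reformulation, dual to the paper's direct approach. But as written the argument has a genuine gap at precisely the point you flag as the ``main obstacle.''

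Lemma~\ref{lem:ShortestArcsPreserved} is only an existence statement: it says \emph{some} $A$ in block $a_i-1$ of $\Pro(w)$ has shortest arc terminating in block $b_i-1$, but it does not single out a particular noncrossing layer of $\D_{\Pro(w)}$, so ``the layer produced by Lemma~\ref{lem:ShortestArcsPreserved}'' is not yet a well-defined object. Even after making a choice, the layer containing that $A$ in $\D_{\Pro(w)}$ is determined by the noncrossing matchings of $\Pro(w)$, not by what happened to $L'_i$ under $\Pro$; its longer arc need not land in block $c_i-1$, and ruling out that it lands in $b_i-1$ (i.e., that the layer is non-double) is exactly the substantive crossing analysis you defer with ``tracing that arc back one block at a time \dots would force a crossing.'' Likewise, ``Proposition~\ref{prop:ContentRotationViaBumpDiagram} and Lemma~\ref{lem:ShortestArcsPreserved} track all of this data faithfully'' is an assertion, not an argument for injectivity: Proposition~\ref{prop:ContentRotationViaBumpDiagram} says the multiset $\{\Pro(L'_i)\}$ recovers $\Pro(f)$ after reordering fibers, but it does \emph{not} say that $\{\Pro(L'_i)\}$ is the noncrossing layer decomposition of $\Pro(f)$.

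The paper instead argues directly that no new double arcs appear: assuming one does in $\D_{\Pro(w)}$, it uses Lemma~\ref{lem:ShortestArcsPreserved} to force the responsible $A$ into block~$1$ of $w$ and then derives a crossing in one of the matchings of $\D_w$. This sidesteps any need to build a layer-to-layer map. Filling the gaps in your route would require essentially the same crossing casework, so the two approaches converge in difficulty. The one-line finite-order argument you mention at the end is correct and is exactly the shortcut the paper records before giving its longer proof.
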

\begin{proof}
    By Lemma~\ref{lem:doubleArcRot}, we need only show that no new double arcs are created. If a double arc could be created, then it must be associated to an $A$ that is in the first block of $w$. 
    To see why, if an $A$ in block $i>1$ in $w$ has a double arc in $\Pro(w)$, by Lemma~\ref{lem:ShortestArcsPreserved} the shortest arc in $\D_w$ from this $A$ must be going to a block $w_d$ with both a $B$ and $C$. Specify this $A$ as $A'$ and suppose its shortest arc is to a $B$.
    Since there was not a double arc in $\D_w$ associated to $A'$, then there must be an $A$ that follows $A'$ in the linear order of the $A's$ with shortest arc to a $B$ that is in a block which precedes $w_d$ that was matched to the $C$ in the double arc in $\D_{\Pro(w)}$.
   We can assume this second $A$ is not part of a double arc, since if so we have just relabeled and not created a new double arc. Consequently this second $A$ must be matched to a $B$ in a block which strictly precedes $w_d$. Thus the swapping of the arcs to the $C$'s of these two $A$'s would induce a crossing in the matching between $A's$ and $C's$, as the $C$ to which $A'$ is matched is in a block strictly following $w_d$.

    The $A$ in $w$ which in $\D_{\Pro(w)}$ is part of a new double arc, call it $A_D$, must be in the first block of $w$ as otherwise by Lemma~\ref{lem:ShortestArcsPreserved} $A_D$ would already form a double arc. Additionally, $A_D$ cannot be matched to either of the $B$ or the $C$ with which it will form a double arc. This is because if $A_D$ did, then there would be an $A$ which follows $A_D$ that is matched to the other $B$ or $C$ and does not form a double arc. This $A$ must follow the $B$ or $C$ that is matched in the shortest arc of $A_D$, in which case no double arc would be formed. Then the $B$ and $C$ which will form a double arc must be matched to different $A's$ which follow $A_D$. But this would then force the shortest arc of $A_D$ to cross one of the arcs connecting to these $B$ and $C$, as it must be connected to a $B$ or $C$ which strictly precedes the two. Therefore there can be no new double arcs.
\end{proof}
For a double arc $D$ in $\D_w$, we say the \emph{interior} of $D$ is the set of arcs of $\D_w$ connected to an $A$ with both arcs nested beneath the double arc. Similarly the \emph{exterior} consists of all other arcs. Importantly, no $A$ can have arcs in both the interior and exterior of a double arc, as it would induce a crossing in one of the matchings. We now have everything needed to show that the removal of double arcs does not impact $\Pro$, formalized in the following Proposition.

\begin{prop}\label{prop:IgnoreDoubleArcs}
        Suppose that $w$ is an $(\ell,q)$-partial multi Kreweras word where the associated arc diagram $\D_w$ has a double arc $D$ of the form $(k,j)$. Let $w_D$ be the $(\ell-1,q)$-partial multi Kreweras word obtained from $w$ by deleting $D$. Then the resulting word obtained by deleting the double arc corresponding to $\Pro(D)$ in $\Pro(w)$ is $\Pro(w_D)$.
\end{prop}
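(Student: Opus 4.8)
The plan is to prove the identity by checking that $\Pro(w_D)$ and the word obtained from $\Pro(w)$ by deleting the double arc corresponding to $\Pro(D)$ have the same noncrossing layer decomposition, together with the observation that an $(\ell,q)$-partial multi Kreweras word is uniquely determined by its noncrossing layer decomposition. For the latter point, note that if $v$ has noncrossing layer decomposition $\{L'_i\}$ then, writing $a_i=L'_i(A)$, $b_i=L'_i(B)$, $c_i=L'_i(C)$, the block $v_t$ contains exactly $\#\{i : a_i=t\}$ copies of $A$, exactly $\#\{i : b_i=t\}$ copies of $B$, and exactly $\#\{i : c_i=t\}$ copies of $C$ (since $\M^B_v$ and $\M^C_v$ are perfect matchings of the $A$'s with the $B$'s and with the $C$'s respectively); so $v$ is recovered from $\{L'_i\}$.

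The crux is then the following deletion principle: if $D=(k,j)$ is a double arc of $\D_v$ whose associated opener is $A_i$, so that $a_i=k$ and $b_i=c_i=j$, then the noncrossing layer decomposition of $v_D$ is $\{L'_s : s\ne i\}$. To see this, recall that $v_D$ is obtained from $v$ by deleting one $A$ from block $k$, one $B$ from block $j$, and one $C$ from block $j$; since the $B$'s (resp.\ $C$'s) lying in a common block play interchangeable roles in the word, we may take the deleted $B$ (resp.\ $C$) to be the closer matched to $A_i$ in $\M^B_v$ (resp.\ $\M^C_v$). Removing the arc of $D$ from $\M^B_v$ then yields a perfect matching of the $A$'s and $B$'s of $v_D$ with the same opener and closer sets as $\M^B_{v_D}$; it is still noncrossing, because deleting letters preserves the relative order of those that remain and hence cannot create a crossing; and a noncrossing perfect matching with prescribed openers and closers is unique, so it must equal $\M^B_{v_D}$. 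The identical argument applies to $\M^C$. Hence every opener $A_s$ with $s\ne i$ keeps its block and the blocks of both of its partners, i.e.\ keeps its layer $L'_s$, while $A_i$ disappears; this is the deletion principle. (The interior/exterior split noted before the statement is what makes precise that the surviving arcs regroup consistently across the deleted double arc.)

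With these two facts the proof is short. Let $\{L'_1,\dots,L'_\ell\}$ be the noncrossing layer decomposition of $W(w)$ and let $L'_i$ be the layer of $D$. By Proposition~\ref{prop:ContentRotationViaBumpDiagram}, the noncrossing layer decomposition of $\Pro(W(w))$ is $\{\Pro(L'_1),\dots,\Pro(L'_\ell)\}$, and by Lemma~\ref{lem:doubleArcRot} the layer $\Pro(L'_i)$ is precisely the double arc $\Pro(D)$ appearing in $\D_{\Pro(w)}$. Applying the deletion principle to $\Pro(w)$ and the double arc $\Pro(D)$, the word obtained by deleting $\Pro(D)$ from $\Pro(w)$ has noncrossing layer decomposition $\{\Pro(L'_s) : s\ne i\}$. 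On the other hand, the deletion principle applied to $w$ and $D$ shows that $w_D$ has noncrossing layer decomposition $\{L'_s : s\ne i\}$, whence Proposition~\ref{prop:ContentRotationViaBumpDiagram} gives that $\Pro(w_D)$ has noncrossing layer decomposition $\{\Pro(L'_s) : s\ne i\}$. The two words therefore share a noncrossing layer decomposition and so, by the invariance established above, are equal.

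The step I expect to be the real obstacle is the deletion principle: one must handle the degeneracies of the generalized bump diagram — several $B$'s or $C$'s closing in a single block, and several $A$'s in one block — with care, both when invoking uniqueness of the noncrossing matching and when using interchangeability of equal-block letters (and, if desired, one verifies directly from the prefix condition that $v_D$ is again a partial multi Kreweras word, using that the opener $A_i$ precedes block $j$). Everything after that is bookkeeping built on Proposition~\ref{prop:ContentRotationViaBumpDiagram} and Lemma~\ref{lem:doubleArcRot}.
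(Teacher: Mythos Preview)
Your argument is essentially the paper's: both rest on the fact that deleting the double arc $D$ from $w$ removes exactly the layer $L'_i$ from the noncrossing layer decomposition (your ``deletion principle'', which the paper states as ``$W(w)$ and $W(w_D)$ have the same noncrossing layer decomposition aside from the layer corresponding to $D$'' and justifies via the interior/exterior split), and then both invoke Proposition~\ref{prop:ContentRotationViaBumpDiagram} together with Lemma~\ref{lem:doubleArcRot} to compare the two sides. Your proof of the deletion principle is more explicit than the paper's.

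One overreach to flag: you write that ``by Proposition~\ref{prop:ContentRotationViaBumpDiagram}, the noncrossing layer decomposition of $\Pro(W(w))$ is $\{\Pro(L'_1),\dots,\Pro(L'_\ell)\}$.'' Proposition~\ref{prop:ContentRotationViaBumpDiagram} only says that the \emph{word} $\Pro(w)$ is obtained by combining the $\Pro(L'_s)$ and reordering within fibers; it does not assert that this multiset is the noncrossing layer decomposition of the result (in general two different multisets of $\V$-labelings can yield the same block counts, and only one of them is the actual noncrossing layer decomposition). Fortunately your proof does not need this stronger claim. As your first paragraph already sets up, the word is determined by block counts, and deleting $\Pro(D)$ from $\Pro(w)$ is purely a block-count operation: remove one $A$, one $B$, one $C$ from the blocks recorded by $\Pro(L'_i)$. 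So on the $\Pro(w)$ side you can skip the deletion principle entirely and compare block counts directly with those of $\Pro(w_D)$, which is exactly what the paper does.
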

\begin{proof}
Note that $W(w)$ and $W(w_D)$ have the same noncrossing layer decomposition aside from the layer corresponding to $D$.  Let $\Int_D(w)$ and $\Ext_D(w)$ denote the $P$-strict labelings corresponding to the labels of the interior and exterior of $D$ respectively. Since there is no overlap between these two collection of arcs, the noncrossing layer decomposition of $W(w)$ is their union together with the layer corresponding to $D$. So by Proposition~\ref{prop:ContentRotationViaBumpDiagram}, $\Pro(w)$ is obtained by applying $\Pro$ to layers corresponding to $D$, $\Int_D(w)$, and $\Ext_D(w)$ and then combining. Following the same reasoning, $\Pro(w_D)$ is obtained by applying $\Pro$ to the layers corresponding to $\Int_D(w)$ and $\Ext_D(w)$ and then combining. The only difference in these layer decompositions is the layer corresponding to $\Pro(L'_D)$, where $L'_D$ is the layer corresponding to $D$. By Lemma~\ref{lem:doubleArcRot} the layer corresponding to $L'_D$ is just $\Pro(L'_D)$, so deleting this layer before or after applying $\Pro$ will make no difference in the corresponding word. 
\end{proof}
\subsection{Standardization and Completing the Proof}
As a consequence of Proposition~\ref{prop:IgnoreDoubleArcs}, as with Lemma~\ref{lem:doubleArcRot} and Lemma~\ref{lem:NoNewDoubleArcs} we not only fully understand how double arcs are impacted under $\Pro$ but also that we can ignore them; we conclude that it suffices to consider the case where there are no double arcs in $\D_w$. Suppose then that $w$ is an $(\ell,q)$-partial multi Kreweras word with no double arcs in $\mathcal{D}_w$. The \emph{standardization} of $w$, $\std(w)$, is the Kreweras word of length $3\ell$ obtained by first linearly ordering the $B$'s and $C$'s of each block of $w$ such that there are no crossings between arcs that terminate in the same block, and then extending the linear orders on the blocks to a linear order of all the letters. The standardization is well defined, as the only such ordering without crossings of arcs that terminate in the same block is the following: within each block the $B$'s and $C$'s are ordered such that the arcs terminating in this block are nesting. Note that the standardization is not an invertible function, see Figure~\ref{fig:stdNoninvert}, but with the information of what the size of each block was, the original word can be recovered uniquely by replacing the labels of the standardization with the multiset of labels of the original word in increasing order.  The final result needed to prove the main theorem is how $\Pro(w)$ impacts $\std(w)$.
\begin{figure}[htb]
    \centering
        \begin{tikzpicture}
        \node (F) at (0,0) {\begin{tabular}{c c c c} $\emptyset$ & AA & CC & BB\\
        1 & 2 & 3 & 4 
        \end{tabular}};
        \node (STD) at (5,0) {\begin{tabular}{c c c c c c } A & A & C & C & B &B\\
        1 & 2 & 3 & 4 & 5 & 6
        \end{tabular}};
        \node (G) at (10,0) {\begin{tabular}{c c c c } A & A & CC & BB\\
        1 & 2 & 3 & 4 
        \end{tabular}};
    \end{tikzpicture}
    \caption{A pair of $(2,4)$ Multi-Partial Kreweras words together with their equal standardization.}
    \label{fig:stdNoninvert}
\end{figure}
\begin{lemma}\label{lem:ProOnStandardization}
    Suppose that $w$ is an $(\ell,q)$-partial multi Kreweras word with $|w_1|$= $k$ and where $\mathcal{D}_w$ has no double arcs. Then $\std(\Pro(w))= \Pro^k(\std(w))$. 
\end{lemma}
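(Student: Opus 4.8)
The plan is to track how the standardization interacts with promotion by analyzing what the first block $w_1$ does over the course of $k$ applications of $\Pro$. The key observation is that, by Proposition~\ref{prop:ContentRotationViaBumpDiagram}, $\Pro$ acts on the noncrossing layer decomposition $\{L'_i\}$ of $W(w)$ by applying $\Pro$ to each $\V$-strict labeling $L'_i$ and then reordering within fibers. So I would first establish that $\std$ interacts well with this decomposition: the noncrossing layer decomposition of $\std(w)$ is obtained from $\{L'_i\}$ simply by recording, for each layer, the \emph{position within its block} of the $A$, $B$, and $C$ in the standardized linear order, and since the standardization orders arcs terminating in a common block so that they nest, these positions are determined by the relative lengths of the arcs. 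The content is that $\std$ changes only the ``fine'' data (position within a block) and not the ``coarse'' data (which block each letter lies in), and the coarse data is exactly what governs $\Pro$ at the level of $\V$-strict labelings.

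Next I would set up the main bookkeeping. Since $|w_1| = k$ and (having reduced to the no-double-arc case) every letter of $w_1$ is an $A$ — the defining inequality for partial multi Kreweras words forces $w_1$ to contain only $A$'s — these are the $k$ lowest-indexed $A$'s, call them $A_1, \dots, A_k$ in the standardized linear order, with shortest arcs to blocks $b_1 \le b_2 \le \cdots \le b_k$ (using Definition~\ref{def:DiagramInducedMatchingPartition} and ordering the layers by shortest-arc length; ties broken by the nesting convention). By the case analysis inside the proof of Proposition~\ref{prop:ContentRotationViaBumpDiagram} — specifically the behavior of a $\V$-strict labeling $g$ with $g(A) = 1$ — applying $\Pro$ once sends the layer $L'_i$ of $A_i$ to $\Pro(L'_i)$, whose $A$ now sits in block $b_i - 1$ (if $b_i > 1$) with its longer leg moved to block $q$, while every other layer has simply had all three coordinates decremented by $1$. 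Iterating: after $b_1 - 1$ applications of $\Pro$, the layer $L'_1$ has its $A$ reach block $1$, at which point the next application moves its long leg to block $q$; and crucially, at every intermediate step the first block still consists only of the as-yet-unprocessed $A$'s, so the recursion is clean. One then checks that after exactly $k$ steps all $k$ of these layers have been ``cycled'' and every layer has net had each coordinate decremented by $1$ modulo $q$ — which is precisely the effect of one application of $\Pro$ to the length-$3\ell$ Kreweras word $\std(w)$, by Proposition~\ref{prop:KrewerasWordPromotionIntertwined} together with Definition~\ref{def:PromotionOfKrewerasWords}: promotion of a Kreweras word advances the first symbol (an $A$, call it index $\iota$) and cyclically shifts, and composing with the reindexing this reads off as decrementing layers.

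The main obstacle — and where I would spend the most care — is verifying that the standardized \emph{ordering} is compatible throughout the iteration: namely that after applying $\Pro$ to the non-standardized word $w$ and re-standardizing, one gets the same thing as applying $\Pro$ once to the standardization $\std(w)$ and then $\Pro^{k-1}$ more times. The subtlety is that $\Pro$ reorders letters within fibers (the ``reordering'' clause of Proposition~\ref{prop:ContentRotationViaBumpDiagram}), so the nesting structure of arcs terminating in a common block can change, and I must show this change is exactly matched by the change induced on $\std(w)$ by one step of Kreweras-word promotion. This should follow from Lemma~\ref{lem:ShortestArcsPreserved}: shortest arcs shift down by one block (or wrap to $q$) in a way that preserves their relative lengths, hence preserves the nesting order the standardization imposes, so re-standardizing after $\Pro$ agrees with $\Pro$ on the already-standardized word. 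I would formalize this as an inductive claim: for $0 \le m \le k$, the word $\std(\Pro^m(w))$ equals $\Pro^m(\std(w))$ after accounting for which of $A_1,\dots,A_k$ have already had their long legs sent around — with the base case $m=0$ being the definition of $\std$ and the inductive step handled by the single-step analysis above, closing at $m = k$ to give $\std(\Pro(w)) = \Pro^k(\std(w))$.
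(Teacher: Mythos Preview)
Your overall intuition is on target --- the paper also proves the lemma by tracking the $k$ $A$'s in $w_1$ (and their shortest-arc partners $\epsilon_1, \dots, \epsilon_k$) through both computations and observing that the results agree. But the inductive framework in your final paragraph is misstated and does not close. As written, the claim ``for $0 \le m \le k$, the word $\std(\Pro^m(w))$ equals $\Pro^m(\std(w))$'' would at $m = 1$ assert $\std(\Pro(w)) = \Pro(\std(w))$, which is generally false (it is $\Pro^k$, not $\Pro$, on the right), and at $m = k$ it would give $\std(\Pro^k(w)) = \Pro^k(\std(w))$ rather than the desired $\std(\Pro(w)) = \Pro^k(\std(w))$. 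You appear to be conflating iterations of $\Pro$ on the partial multi Kreweras word $w$ (each of which processes the entire first block at once) with iterations of $\Pro$ on the standardized Kreweras word $\std(w)$ (each of which processes a single leading letter). The same conflation appears in the middle paragraph: ``after $b_1 - 1$ applications of $\Pro$, the layer $L'_1$ has its $A$ reach block $1$'' only makes sense if you are applying $\Pro$ to the standardized word, but you have just described what a single $\Pro$ on $w$ does to all layers simultaneously.

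The paper avoids any such intermediate induction by computing both sides directly. On the $\std(w)$ side it shows $\Pro^k(\std(w))$ shifts every letter not among $A_1, \dots, A_k, \epsilon_1, \dots, \epsilon_k$ forward by $k$, replaces each $\epsilon_i$ by an $A$, and appends $\epsilon_1 \cdots \epsilon_k$ at the end; the crucial observation (which your proposal never isolates) is that once $A_1$ has been processed by one Kreweras promotion, its new arcs nest \emph{outside} the remaining shortest arcs $(A_i,\epsilon_i)$ for $i>1$, so the next $k-1$ promotions simply shift them without interference --- this is where the paper invokes \cite[Proposition 3.10]{hopkins2022promotion} and Lemma~\ref{lem:ShortestArcsPreserved}. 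On the $w$ side it reads off $\Pro(w)$ from Proposition~\ref{prop:ContentRotationViaBumpDiagram} and checks that standardizing yields exactly the same description. Your sketch contains fragments of this, but to make it a proof you need to drop the faulty induction and instead establish the non-interference of the processed $A$'s with the unprocessed ones across the $k$ Kreweras-word promotions.
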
  
\begin{proof}
   Given a $(\ell,q)$-partial multi Kreweras word $w$ with $|w_1|=k$, consider $\std(w)$. In $\std(w)$, let $A_1,A_2, \dots A_k$ denote the first $k$ $A$'s of $\std(w)$ and $\epsilon_i$ is the $B$ or $C$ that is matched to $A_i$ via the shortest arc. The shortest arc is always well defined as $\mathcal{D}_w$ has no double arcs. Then consider $\Pro(\std(w))$. Observe that $\Pro(\std(w))$ is obtained by shifting all letters that aren't $A_1$ and $\epsilon_1$ forward one space, placing $A_1$ in the space before $\epsilon_1$, and placing $\epsilon_1$ at the end. For $i >1$, $A_1$ precedes $A_i$ and $\epsilon_1$ follows $\epsilon_i$, so in $\Pro(\std(w))$ the $A$ that corresponded to $A_1$ has no arcs which cross any of the arcs between $A_i$ and $\epsilon_i$ for $i >1$. Then following the proof of \cite[Proposition 3.10]{hopkins2022promotion}, which shows that arcs which do not cross the shortest arc of the first $A$ are just shifted down by $1$ in each coordinate under $\Pro$, for the next $k-1$ iterations of $\Pro$ on $\Pro(\std(w))$ the arcs connecting to the $A$ which corresponded to $A_1$ will just shift down by $1$ in each coordinate. By Lemma~\ref{lem:ShortestArcsPreserved} the $A$ corresponding to $A_i$ still has shortest arc to $\epsilon_i$ through the first $i-1$ applications of $\Pro$ on $\std(w)$. Consequently $\Pro^k(\std(w))$ is obtained by shifting each letter which was not an $A_i$ or $\epsilon_i$ forward by $k$ positions, placing an $A$ exactly $k$ positions before the position of each $\epsilon_i$, and the last $k$ letters are $\epsilon_1\epsilon_2\dots \epsilon_k$. Additionally, there is no crossing among arcs connecting to the final $k$ letters since for all $1\le i \le k$ if $i < j$, the $A$ to which $\epsilon_i$ is matched is preceded by the $A$ to which $\epsilon_j$ is matched.

Denote by $A_{1}', A_{2}', \dots , A_{k}'$ the $A's$ in $w_1$ and by $\epsilon'_i$ the $B$ or $C$ in $w$ to which $A_i'$ has its shortest arc. Note that if the letter which follows $\epsilon'_i$ in $\std(w)$, and is not $\epsilon'_{i-1}$, is in the same block as $\epsilon_i'$, then it must be an $A$, since if it is a $B$ or $C$ it must be different than $\epsilon_i'$, as otherwise, because it is not $\epsilon'_{i-1}$, there would be more $B$'s or $C$'s at that point than $A$'s. Similarly it cannot be different due to the lack of double arcs. Consequently we have that in a block the $\epsilon_i'$'s are the terminal sequence of non-$A$ letters.
By Proposition ~\ref{prop:ContentRotationViaBumpDiagram} we know that $\Pro(w)$ is the $P$-strict labeling obtained by shifting each label not associated to $A_i'$ or $\epsilon_i'$ down by 1, having the label associated to $A_i'$ be 1 less than that of $\epsilon_i'$, and having the label of $\epsilon_i$ become $q$. 
This implies that the multiset of the values of the labels has corresponded to cyclically shifting each element down by 1. Additionally, for each $A$, $B$, or $C$ that was not an $A'_i$ or $\epsilon_i'$ there are $k$ fewer preceding letters. Then consider $\std(\Pro(w))$. Observe that the computation for $\Pro(w)$ is the same as deleting $w_1$, replacing each $\epsilon'_i$ with $A'_i$, adding a new artificial block labeled by $q+1$ equal to the multiset of the $\epsilon'_i$'s, and then reducing the label of each block by 1. 
What this corresponds to for $\std(\Pro(w))$ is the same as deleting the first $k$ letters, replacing the $\epsilon_i's$ with $A$'s, adding $k$ letters corresponding to the $\epsilon_i$'s in order at the end, and then shifting the indices down by $k$. Note then that this is the same as $\Pro^k(\std(w))$.
\end{proof}

This is the final tool needed to prove our main result.

\begin{proof}[Proof of Theorem~\ref{thm:Main}]
Suppose that $\D_w$ contains some number of double arcs and consider $\Pro^q(w)$. By Lemmas~\ref{lem:doubleArcRot} and~\ref{lem:NoNewDoubleArcs}, it follows that the double arcs of $\D_{\Pro^q(w)}$ are the same as in $\D_w$, since the endpoints of each double arc were just shifted by $q\mod q$. So by Proposition~\ref{prop:IgnoreDoubleArcs} we can reduce to the case where $w$ has no double arcs.

If $\D_w$ has no double arcs, consider $\Pro^q(w)$. Note that by Proposition~\ref{prop:ContentRotationViaBumpDiagram} the multiset of values for the labels will be the same. Additionally, one can notice that through the $q$ applications, there will be exactly $3\ell$ 1's in the multisets of labels, as the number of instances of each label cyclically rotates. By Lemma~\ref{lem:ProOnStandardization} $\std(\Pro^q(w))=\Pro^{3\ell}(\std(w))$, which by \cite[Theorem 1.2]{hopkins2022promotion} is the reflection of the labels. Then since the standardization is invertible if the multiset of values is known, $\Pro^q(w)$ is just swapping all instances of $B$'s and $C's$. Thus $\Pro^{2q}(w)=w$. 
\end{proof}

\subsection{Periodicity of Piecewise-Linear Rowmotion}\label{sec:Consequences}

In this section, we examine the impact of Theorem~\ref{thm:Main} on $\row$ on $\O(P)$ beyond just implying the finite periodicity. First, we will utilize the equivariance of $\Pro$ acting on $\mathcal{L}_{\V\times [\ell]}(R^q)$ and $\row$ acting on $\PP^\ell(\V\times [q-2])$ to show that $\row^q$ is also just the reflection of the labels. 
We will utilize the fact that this reflection, denoted Flip, is an automorphism of $\V$.

In a more general setting, if $P$ is a graded poset of rank $n$ and $\psi$ is an automorphism of $P$, we have an action of $\psi$ on $P\times [k]$ for every $k$ where $\psi((p,i))=(\psi(p),i)$ for all $(p,i)\in P\times [k]$. This induces an action on $\O(P\times [k])$ by $\psi(f((p,i)))= f((\psi(p),i))$.

Before stating the technical lemma that will be key to the proof that $\row^q$ acts by reflecting the labels on $\O(P)$, we state the intermediate bijection used in the proof of Proposition~\ref{prop:corconj}, known as \emph{toggle-promotion}, defined more generally in~\cite{bernstein2021p}, between $\Pro$ and $\row$. As before we assume $P$ is graded.

\begin{definition}[\cite{bernstein2024p}]
      \emph{Toggle-promotion} on $\PP^{\ell}(P\times[q-n-1])$ is defined as the toggle composition $\text{TogPro} := \cdots \circ\tau_3 \circ \tau_{2}\circ \tau_{1}$, where $\tau_{k}$ denotes the composition of all the $\tau_{(p,k)}$ over all $p \in P$, such that $(p,i)\in P\times [q-n-1]$ and $ i = q-n+\text{rk}(p)-k$.
\end{definition}

\begin{lemma}\label{lem:AutCommute}
        For $P$ a graded poset with $\psi$ an automorphism of $P$, the action of $\psi$ commutes with the equivariant bijection between $\text{TogPro}$ and $\row$ on $\O(P\times [q-(n+1)])$.
\end{lemma}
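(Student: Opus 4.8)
The plan is to unwind the definitions of the equivariant bijection promised in Proposition~\ref{prop:corconj} and check that an automorphism $\psi$ of $P$ intertwines with each map in the chain $\Pro \rightsquigarrow \text{TogPro} \rightsquigarrow \row$. Since $\text{TogPro}$ is built from the toggles $\tau_{(p,k)}$ on $\PP^\ell(P\times[q-(n+1)])$ and $\row$ is built from the toggles $\tau_p$ on $\O(P\times[q-(n+1)])$, the essential point is that both toggle operators are defined purely in terms of the cover relations of $P\times[q-(n+1)]$, and $\psi$ (acting as $(p,i)\mapsto(\psi(p),i)$) is a poset automorphism of $P\times[q-(n+1)]$. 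First I would record the elementary fact that for any poset automorphism $\varphi$ of a graded poset $Q$ and any $x\in Q$, one has $\varphi \circ \tau_x = \tau_{\varphi(x)}\circ \varphi$ on $\O(Q)$ (and likewise on $\PP^\ell(Q)$): this is immediate from the toggle formula, since $\varphi$ carries the set $\{r : x\lessdot r\}$ to $\{r : \varphi(x)\lessdot r\}$ and $\{r : r\lessdot x\}$ to $\{r : r\lessdot \varphi(x)\}$, and $(\varphi f)(r) = f(\varphi^{-1}(r))$ reflects the interval in the same way.

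Next I would address $\text{TogPro}$ itself. Its definition groups the toggles $\tau_{(p,k)}$ by the quantity $i = q-n+\mathrm{rk}(p)-k$; since $\psi$ preserves ranks ($\mathrm{rk}(\psi(p))=\mathrm{rk}(p)$), the automorphism $\psi$ permutes the elements $(p,k)$ within each such group and fixes the grouping. Because toggles at elements not sharing a cover relation commute, and $\psi$ sends cover relations to cover relations, the composition $\tau_k = \prod_{p} \tau_{(p,k)}$ satisfies $\psi\circ\tau_k = \tau_k\circ\psi$ — more precisely $\psi$ conjugates the factors among themselves and the order within a block is immaterial up to commutation. Hence $\psi\circ\text{TogPro} = \text{TogPro}\circ\psi$. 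The same argument, applied to a linear extension of $P\times[q-(n+1)]$ used to define $\row$, gives $\psi\circ\row = \row\circ\psi$; and here I would invoke the remark made just after Proposition~\ref{prop:corconj} that the equivariant bijection between $\text{TogPro}$ and $\row$ does not depend on a choice of linear extension, so it is literally the identity-type map (or in any case a canonically defined one) and commutes with $\psi$ trivially. Assembling these, $\psi$ commutes with the bijection between $\text{TogPro}$ and $\row$ on $\O(P\times[q-(n+1)])$, which is the claim.

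The only genuinely delicate point — and the one I would be most careful about — is the commutation $\psi\circ\text{TogPro} = \text{TogPro}\circ\psi$ when $\psi$ is a \emph{nontrivial} automorphism such as $\mathrm{Flip}$ on $\V$: one must verify that $\psi$ does not merely permute the blocks $\tau_k$ but fixes each block setwise, i.e. that the index assignment $i = q-n+\mathrm{rk}(p)-k$ is $\psi$-invariant in the right way, and that within a block the relevant toggles pairwise commute (which holds because distinct elements $(p,k)$ and $(p',k)$ of a common block lie in distinct fibers at the same ``height'' and so share no cover relation). I would also double-check that the claim is stated on $\O(P\times[q-(n+1)])$ and therefore holds on $\PP^\ell$ for all $\ell$ by restriction, since $\psi$ preserves each $\PP^\ell$. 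With these checks in place the lemma follows; everything else is the routine definition-chasing of the toggle formula sketched above.
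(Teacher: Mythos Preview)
Your proof is correct and rests on the same underlying idea as the paper's: the automorphism $\psi$ is a relabeling of the elements of $P\times[q-(n+1)]$, and all the maps involved (the individual toggles, $\text{TogPro}$, $\row$, and the conjugating bijection $\varphi$) are defined intrinsically in terms of the poset structure, hence are invariant under such a relabeling. The paper's argument is more compressed --- it simply notes that $\psi$ amounts to a change of coordinate realization of $\O(P\times[k])$ and that $\text{TogPro}$, $\row$, and $\varphi$ are all coordinate-independent --- whereas you unpack this into explicit toggle-by-toggle commutation relations $\psi\circ\tau_x=\tau_{\psi(x)}\circ\psi$, a rank-preservation check to see that $\psi$ fixes each block $\tau_k$ of $\text{TogPro}$ setwise, and the observation that toggles within a block commute; your route is more hands-on but arrives at the same conclusion by the same mechanism.
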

\begin{proof}
    To begin suppose that $L$ is an arbitrary ordering of $P\times [k]$. Then consider the realization of $\O(P\times [k])$ where the coordinate for $(p,i)$ is $L((p,i))$. Let $L' = L((\psi(p),i)).$ Note that for any $f\in \O(P\times [k])$, $\psi(f)$ is the same as if we instead chose the realization given by $L'$. As TogPro, $\row$, and $\varphi$ are all defined independently of the choice of coordinates, $\psi$ will commute with all of them, as they all commute with a change of coordinates.
\end{proof}
\begin{prop}
    The action of $\row^q$ on $\O(\V\times [q-2])$ is equal to the action of Flip.
\end{prop}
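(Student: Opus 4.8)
The plan is to transport the statement ``$\Pro^q$ on $\mathcal{L}_{\V\times[\ell]}(R^q)$ is the swap of all $B$'s and $C$'s'' from the end of the proof of Theorem~\ref{thm:Main} across the chain of equivariant bijections $\mathcal{L}_{\V\times[\ell]}(R^q) \to \PP^\ell(\V\times[q-2])$, and then take a density/limit over all $\ell$ to conclude the statement on the full order polytope $\O(\V\times[q-2])$. The key observation is that the ``swap $B$'s and $C$'s'' operation on $\mathcal{L}_{\V\times[\ell]}(R^q)$ is exactly the action of the nontrivial automorphism $\mathrm{Flip}$ of $\V$ (the one exchanging $B$ and $C$, fixing $A$) in the sense made precise just before this proposition, since $\mathrm{Flip}$ acts on $\V\times[\ell]$ by $\mathrm{Flip}((p,i))=(\mathrm{Flip}(p),i)$ and this manifestly coincides with relabeling which fibers carry which letter.

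First I would invoke the proof of Theorem~\ref{thm:Main}, which shows directly that $\Pro^q(w)$ is obtained from $w$ by interchanging all $B$'s and $C$'s; at the level of $P$-strict labelings $f = W(w)$ this says precisely that $\Pro^q(f) = \mathrm{Flip}(f)$, where $\mathrm{Flip}$ is the automorphism-induced action on $\mathcal{L}_{\V\times[\ell]}(R^q)$. Next I would push this through the equivariant bijection of Proposition~\ref{prop:corconj}. By Lemma~\ref{lem:AutCommute}, the action of $\mathrm{Flip}$ commutes with the equivariant bijection between $\mathrm{TogPro}$ and $\row$, and (as noted in the text after Proposition~\ref{prop:corconj}) the bijection between $\mathrm{TogPro}$ and $\row$ is automorphism-invariant; combining these with the intertwining of $\Pro$ and $\mathrm{TogPro}$ yields that under the bijection $\varphi:\mathcal{L}_{\V\times[\ell]}(R^q)\xrightarrow{\sim}\PP^\ell(\V\times[q-2])$ we have $\varphi\circ\Pro = \row\circ\varphi$ and $\varphi\circ\mathrm{Flip} = \mathrm{Flip}\circ\varphi$. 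Hence $\row^q = \varphi\circ\Pro^q\circ\varphi^{-1} = \varphi\circ\mathrm{Flip}\circ\varphi^{-1} = \mathrm{Flip}$ on $\PP^\ell(\V\times[q-2])$, for every $\ell\ge 1$.

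Finally I would upgrade from $\ell$-bounded $\V$-partitions to the whole order polytope. The set $\bigcup_{\ell\ge 1}\tfrac{1}{\ell}\PP^\ell(\V\times[q-2])$ consists of all rational points of $\O(\V\times[q-2])$ and is dense; both $\row^q$ and $\mathrm{Flip}$ are continuous (indeed piecewise-linear) self-maps of the polytope, and on this dense set they agree by the previous paragraph (rescaling commutes with both $\row$ and $\mathrm{Flip}$, since $\row$ is a composition of toggles each of which is equivariant under scaling, as remarked in Section~\ref{sec:rowmotion}). Therefore $\row^q = \mathrm{Flip}$ on all of $\O(\V\times[q-2])$.

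The main obstacle I anticipate is bookkeeping the chain of equivariances cleanly: one must make sure that $\mathrm{Flip}$ really does correspond on both sides to the ``swap $B$'s and $C$'s'' map and to the polytope reflection, and that each bijection in the composition ($W$, the $\Pro$–$\mathrm{TogPro}$ intertwiner, the $\mathrm{TogPro}$–$\row$ bijection) is genuinely $\mathrm{Flip}$-equivariant. Lemma~\ref{lem:AutCommute} handles the last of these, and the text's remark that the $\mathrm{TogPro}$–$\row$ bijection ``does not depend on any linear extension of $P$ and is thus invariant under any automorphism'' handles the compatibility; the only genuinely new content is identifying the combinatorial ``$B\leftrightarrow C$ swap'' with the automorphism action, which is immediate from the definition of $W$ (it only records, for each fiber, how many copies of each letter appear in each block) together with the observation that $\mathrm{Flip}$ exchanges the fibers $F_B$ and $F_C$.
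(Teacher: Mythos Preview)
Your proposal is correct and follows essentially the same approach as the paper: transport $\Pro^q=\mathrm{Flip}$ on $\mathcal{L}_{\V\times[\ell]}(R^q)$ through the equivariant bijection of Proposition~\ref{prop:corconj} using Lemma~\ref{lem:AutCommute}, then pass to $\O(\V\times[q-2])$ by density of the rational points. The only cosmetic difference is that the paper phrases the conjugation as showing $\mathrm{Flip}\circ\row^q=\mathrm{id}$ (first on rational points via $\mathrm{TogPro}$, then on the whole polytope) and then inverts, whereas you conjugate $\Pro^q=\mathrm{Flip}$ directly to obtain $\row^q=\mathrm{Flip}$.
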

\begin{proof}
    As a consequence of Theorem~\ref{thm:Main} we know that $\text{Flip}\circ\text{TogPro}^q$ is the identity on the rational points of $\O(\V\times [q-2])$ and thus on $\O(\V\times[q-2])$. Then consider $\text{Flip}\circ \row^q= \text{Flip}\circ \varphi\circ \text{TogPro}^q \circ \varphi^{-1}$. By Lemma~\ref{lem:AutCommute}, $\text{Flip}\circ \varphi\circ \text{TogPro}^q \varphi^{-1} = \varphi\circ\text{Flip}\circ \text{TogPro}^q\circ  \varphi^{-1}$ which is the identity. So $\text{Flip} = \row^{-q}=\row^q$ as $\row^q$ is an involution. 
\end{proof}
\bibliographystyle{alphaurl}
\bibliography{bib}
\end{document}